%
%
%
%
%
%
\documentclass[a4paper,11pt]{article}

\usepackage[margin=3cm]{geometry}
\usepackage{setspace}
\usepackage{amsmath}
\usepackage{amssymb}
\usepackage{amsthm}

\usepackage{psfrag}
\usepackage{graphicx,subfigure}
\usepackage{color}

\usepackage{alltt}

\usepackage{enumerate}


\def\bG{\bar{\Gamma}}
\def\bS{\bar{\Sigma}}
\def\bV{\bar{V}}
\def\bU{\bar{U}}

\def\tg{\tilde{\gamma}}
\def\ts{\tilde{\sigma}}
\def\tv{\tilde{v}}
\def\tu{\tilde{u}}
\def\del{\partial}
\def\eps{\varepsilon}

\newtheorem{lemma}{Lemma}[section]
\newtheorem{proposition}{Proposition}[section]
\newtheorem{remark}{Remark}[section]

\begin{document}
\title{Localization in inelastic rate dependent \\
shearing deformations
}
\author{Theodoros Katsaounis\footnotemark[1]\ \footnotemark[2]
\and Min-Gi Lee\footnotemark[1]
\and Athanasios Tzavaras\footnotemark[1]\  \footnotemark[3]  \footnotemark[4]}
\date{}

\maketitle
\renewcommand{\thefootnote}{\fnsymbol{footnote}}
\footnotetext[1]{Computer, Electrical and Mathematical Sciences \& Engineering Division, King Abdullah University of Science and Technology (KAUST), Thuwal, Saudi Arabia}
\footnotetext[2]{Department of Mathematics and Applied Mathematics, University of Crete, Heraklion, Greece}
\footnotetext[3]{Institute of Applied and Computational Mathematics, FORTH, Heraklion, Greece}
\footnotetext[4]{Corresponding author : \texttt{athanasios.tzavaras@kaust.edu.sa}}
\renewcommand{\thefootnote}{\arabic{footnote}}

\baselineskip=12pt

\begin{abstract}
 Metals  deformed at high strain rates can exhibit failure through formation of shear bands, a phenomenon often attributed 
to Hadamard instability and localization of the strain into an emerging coherent structure. 
We verify  formation of shear bands for a nonlinear model exhibiting strain softening and strain rate sensitivity. The effects of strain softening and strain rate sensitivity 
are first assessed by linearized analysis, indicating that the combined effect leads to Turing instability. 
For the  nonlinear model a class of  self-similar solutions is constructed, that depicts a coherent localizing structure and the formation of a shear band. 
This solution is associated to a heteroclinic orbit of a dynamical system. The orbit is constructed numerically and yields explicit shear localizing solutions. 
\end{abstract}



\section{Introduction}
Shear bands are narrow zones of intense shear observed during the dynamic deformation of many metals at high strain rates. 
{  Shear localization
forms a striking instance  of material instability, often preceding rupture, and its study has attracted considerable  attention in the mechanics  {\it e.g.}~\cite{clifton_critical_1984, FM87, MC87,AKS87, WW87, clifton_rev_1990, tzavaras_nonlinear_1992,  BD02, wright_survey_2002},
numerical \cite{walter_1992,CB99, estep_2001},
or mathematical literature \cite{dafermos_adiabatic_1983, tzavaras_plastic_1986, bertsch_effect_1991, estep_2001,KT09}.
In experimental investigations of high strain-rate deformations of steels,  observations of shear bands are  typically associated 
with strain softening response -- past a critical strain -- of the measured stress-strain curve \cite{clifton_critical_1984}. 
It was proposed by Zener and Hollomon \cite{zener_effect_1944}, and further precised by Clifton et al  \cite{clifton_critical_1984, SC89},
that the effect of the deformation speed is twofold:
An increase in the deformation speed changes the deformation
conditions from isothermal to nearly adiabatic, and the combined effect of thermal softening and strain hardening of metals may produce 
a net softening response. On the other hand, strain-rate hardening has an effect {\it per se}, inducing momentum diffusion and playing a stabilizing role.

Strain softening has a destabilizing effect; it is known at the level of simple models ({\it e.g.} Wu and Freund \cite{WF84}) to induce \emph{Hadamard instability} - 
an ill-posedness of a linearized problem.
It should however be remarked that, while Hadamard instability indicates the catastrophic growth 
of oscillations around a mean state,  what is observed in localization is the orderly albeit extremely fast development of coherent structures, the shear bands. 
Despite considerable  attention to the problem of localization, little is known about the initial formation of shear bands, 
due to the dominance of nonlinear effects from the early instances of localization. }

Our aim is to study the onset of localization for a simple model, 
lying at the core of various theories for  shear band formation,
\begin{equation}
\label{intro-maineq}
\begin{aligned}
\del_t v   &=  \del_x  \big (  \varphi (\gamma) \, v_x^n \big ),
\\
\del_t \gamma  &= \del_x v  \, ,
\end{aligned}
\end{equation}
which will serve to assess the effects of strain softening $\varphi' (\gamma)  < 0$ and strain-rate sensitivity $0<n\ll1$ and 
analyze the emergence of a shear band out of the competition
of Hadamard instability and strain-rate hardening.
The model \eqref{intro-maineq} describes shear deformations of a viscoplastic material in the $xy$-plane, with $v$ the velocity in the $y$-direction, 
$\gamma$ the plastic shear strain  (elastic effects are neglected), and the material is obeying a viscoplastic constitutive law of power law type,
\begin{equation}
\label{intro-stress}
\sigma = \frac{1}{\gamma^m} (\gamma_t)^n  \, ,  \quad  \; \; \mbox{corresponding to \; $\varphi(\gamma) = \gamma^{-m}$ with $m >0$} \, ,
\end{equation}
The constitutive law \eqref{intro-stress} can be thought as describing a plastic flow rule on the yield surface. 
The model \eqref{intro-maineq} captures the bare essentials of the localization mechanism proposed in \cite{zener_effect_1944, clifton_critical_1984, SC89}.
Early studies of \eqref{intro-maineq} appear in Hutchinson and Neal \cite{HN77} (in connection to necking),  Wu and Freund \cite{WF84} (for linear rate-sensitivity)  
and  Tzavaras \cite{tzavaras_plastic_1986, tzavaras_strain_1990, tzavaras_nonlinear_1992}.

The uniform shearing solutions,  
\begin{equation} \label{intro-uss}
 v_s(x) = x, \quad 
 \gamma_s(t) = t + \gamma_0 \, , \quad  
 \sigma_s(t) = \varphi (  t + \gamma_0 ) \, , 
\end{equation}
form a universal class of solutions to \eqref{intro-maineq} for any $n \ge 0$.  When $n = 0$ and $\varphi'(\gamma) < 0$ the system \eqref{intro-maineq}
is elliptic in the $t$-direction and the initial value problem for the associated linearized equation presents Hadamard instability; nevertheless it 
admits \eqref{intro-uss} as a special solution. Rate sensitivity $n > 0$ offers a regularizing mechanism, and the associated system \eqref{intro-maineq}
belongs to the class of hyperbolic-parabolic systems ({\it e.g.} \cite{tzavaras_strain_1990}).   
{   The linearized stability analysis of \eqref{intro-uss} has been studied in \cite{FM87,MC87,SC89} for (even more complicated models including) 
\eqref{intro-maineq}-\eqref{intro-stress}.
As  \eqref{intro-uss} is time dependent,  the problem of linearized stability leads to the study of non-autonomous
linearized problems. This was addressed by Fressengeas-Molinari \cite{FM87} and  Molinari-Clifton \cite{MC87} who introduced
the study  of relative perturbations, namely to assess the stability of the ratios of the perturbation relative to the base time-dependent solution,
and provided linearized stability results.
Such linearized stability and instability results compared well with studies of nonlinear stability ({\it e.g}  \cite{tzavaras_plastic_1986} or \cite{tzavaras_nonlinear_1992} for a survey).
}

Here, we restrict to the constitutive function $\varphi(\gamma) = \frac{1}{\gamma}$  in  \eqref{intro-stress} but retain the dependence in $n$,
and study
\begin{align}
 v_t = \bigg( \frac{v_x^n}{ \gamma } \bigg)_x \, ,  \quad 
 \gamma_t = v_x \, .
 \label{intro-model}
\end{align}
This model has a special and quite appealing  property:  After considering a transformation to relative perturbations and a rescaling of variables,
\begin{equation}
\label{intro-rescale}
\begin{aligned}
v_x (x,t)  =: u (x,t) =  U (x,\tau(t))  \, ,  \quad 
 \gamma(x,t) &= \gamma_s(t) \, \Gamma(x, \tau(t)) \, , \quad 
 \sigma(x,t) = \sigma_s(t) \, \Sigma(x, \tau(t)) 
 \\
 \mbox{where} \quad 
 &\tau(t) = \log(1+ \frac{t}{\gamma_0}) \, ,
\end{aligned}
\end{equation}
the problem of stability of the time-dependent uniform shearing solution \eqref{intro-uss} is transformed into the problem
of stability of the equilibrium $( \bar U , \bar \Gamma ) = (1,1)$ for the nonlinear but  {\it autonomous} parabolic system 
\begin{align}
 U_\tau = \Sigma_{xx} =\bigg( \frac{U^n}{\Gamma}\bigg)_{xx} \, , \quad  \Gamma_\tau = U - \Gamma \, .
 \label{intro-redsys}
\end{align}

{   The following heuristic argument leads to a conjecture regarding the effect of rate sensitivity $n$ on the dynamics: As time proceeds the second equation in \eqref{intro-redsys},
which is of relaxation type,  relaxes to the equilibrium manifold $\{ U = \Gamma \}$. Accordingly, the stability of \eqref{intro-redsys} is determined by the equation
describing the effective equation
\begin{equation*}
U_\tau = \big(  U^{n-1} \big)_{xx} \, .
\end{equation*}
The latter is parabolic for $n >1$ and backward parabolic for $n < 1$, what suggests instability in the range $n < 1$.
The argument is proposed in \cite{KT09} in connection 
to the development  of an asymptotic criterion for the quantitative assessment of  shear band formation 
and can be quantified by means of  an  asymptotic expansion (see Section \ref{sec:coarse_scale}).

In this article we study the dynamics of the system \eqref{intro-redsys}. 
First, we provide a complete analysis of linearized stability. 
The linearized system around the equilibrium $(1,1)$  takes the form
\begin{equation}
\label{intro-linp}
\begin{aligned}
{\tilde{U}}_\tau &= n \tilde{U}_{xx}  - \tilde\Gamma_{xx}, 
\\
{ \tilde\Gamma }_\tau &= \tilde{U} - \tilde\Gamma,  
\end{aligned}
\end{equation}
and is simple to analyze via Fourier analysis.  A complete picture emerges:
\begin{itemize}
\item[(a)]  For $n=0$, high-frequency modes grow exponentially fast and indicate catastrophic growth and Hadamard instability.
   \item[(b)] For $0 < n < 1$, the modes still grow and are unstable  but at a tame growth rate. 
\item[(c)] For $n > 1$ strain-rate dependence is strong and stabilizes the motion.
\end{itemize}
The instability occurring in rate-dependent localization resembles at the linearized level to the {\it Turing instability} 
familiar from problems of morphogenesis \cite{Turing52} ({\it cf.} Remark \ref{remarkTuring}).
}

Next,  we turn to the nonlinear system \eqref{intro-redsys} and proceed to analyze 
the competition between Hadamard instability
and strain-rate dependence in the nonlinear regime in the parameter range $0 < n < 1$. 
Exploiting scaling properties of \eqref{intro-redsys} we construct 
a class of {\it focusing} self-similar solutions of  the form 
\begin{equation}
 \begin{aligned}
 \gamma(x,t) &= \gamma_0\Big(1 + \frac{t}{ \gamma_0 } \Big)^{1+ \frac{ 2 \lambda}{2-n} } \;\bG\bigg(x\Big(1 + \frac{t}{ \gamma_0 } \Big)^{\lambda}\bigg), \\
 \sigma(x,t) &= \frac{1}{\gamma_0}\Big(1 + \frac{t}{ \gamma_0 } \Big)^{ -1- 2 \lambda \frac{1 - n}{2-n} } \;\bS\bigg(x\Big(1 + \frac{t}{ \gamma_0 } \Big)^{\lambda}\bigg), \\
u(x,t) &=   v_x (x,t)  = \Big(1 + \frac{t}{ \gamma_0 }\Big)^{\frac{2 \lambda}{2-n} } \;\bU\bigg(x\Big(1 + \frac{t}{ \gamma_0 } \Big)^{\lambda}\bigg). \\
 \end{aligned} \label{intro-ss}
\end{equation}
{    For $\lambda > 0$ -- and as opposed with the usual self-similar solutions of diffusion equations -- the information will
propagate on the lines $x\big(1 + \frac{t}{ \gamma_0 } \big )^\lambda = const$ and  focus towards the center $x = 0$.
The solution \eqref{intro-ss}  depends on four-parameters: $n$, the growth-rate $\lambda > 0$, and the initial data  $(\bG_0, \bU_0)$ standing for the sizes of the
initial nonuniformities:  $\bG_0=\bG(0)$ in the strain and  $\bU_0=\bU(0)$ in the strain rate. 
The profiles $(\bar \Gamma, \bar U,  \bar \Sigma)$, $\bar U = \bar V_\xi$,   solve the singular system \eqref{intro-barsys} and are constructed 
numerically. For the construction, it turns out that the parameters need to satisfy the constraints
\begin{equation}
\label{intro-size}
\lambda = \frac{2-n}{2}\left(\frac{\bU_0}{\bG_0} -1 \right) \, , \qquad 0< \lambda < \frac{(2-n)(1-n)}{n} \, .
\end{equation}
The uniform shear solution corresponds to the choice $\bar U_0 = \bar \Gamma_0 =1$ associated to no growth $\lambda = 0$. 
The solution \eqref{intro-ss} shows that rate-sensitivity suppresses, at the nonlinear level, the oscillations 
resulting from Hadamard instability and  that the combined process leads  into a single ``runaway"
of concentrated strain  that appears like the shear bands observed in experiments. 
It is sketched at various time instances  in Figure \ref{fig:gamma} for the strain $\gamma$, 
 in Figure \ref{fig:u} for the strain rate $\gamma_t$, in Figure \ref{fig:v} for the velocity $v$, and in Figure \ref{fig:sigma}  for the stress $\sigma$. 
The figure for the stress provides an analytical justification of the phenomenon of  stress collapse
across the shear band, predicted in theoretical results of \cite{tzavaras87} and in numerical computations of Wright and Walter \cite{WW87}. 
}

Our analysis  validates for the model \eqref{intro-model}  the onset of localization predicted by the asymptotic
analysis in \cite{KT09} along the lines of the  Chapman-Enskog expansion of kinetic theory. 
The result complements an earlier study of similar behavior
for a thermally softening temperature dependent non-Newtonian fluid \cite{KOT14}. In a companion article \cite{LT16}, we provide an
existence  proof for the heteroclinic orbit (computed numerically here);
this is accomplished  even at the level of the plastic flow rule \eqref{intro-stress} in the parameter range $0 < n < m$.  (This region
is optimal as it is known that for $m > n$ the uniform shear is asymptotically stable \cite{tzavaras_strain_1990}.) The proof of existence for the heteroclinic
employs the geometric singular perturbation theory for dynamical systems and is outside the scope of the present work. 

The article is organized as follows :  in Section \ref{sec:model} we introduce the mathematical model along with its basic properties. 
The idea of relative perturbations for the stability of the time-dependent uniform shearing solutions is reviewed in Section \ref{sec:relative} and the complete
 linearized stability analysis is  presented in  Section \ref{sec:linear}. 
A dichotomy of stability for the linearized problem appears, depending on the strain-rate sensitivity, in accord with \cite{FM87,MC87} and 
results on nonlinear stability \cite{tzavaras_plastic_1986, tzavaras_nonlinear_1992}.
From Section \ref{sec:coarse_scale} onwards, we study nonlinear effects. First, an effective equation is derived via an asymptotic analysis 
following \cite{KT09} that postulates  instability in the parameter regime $0 < n < 1$. 
The emergence of localization is studied in Section \ref{sec:localization}: 
We introduce an ansatz of {\it focusing} self-similar solutions
\begin{equation}
  V(x,\tau) = e^{\lambda  \frac{n}{2-n} \tau} \,\bar{V}\big( \xi\big) \, , \quad 
  \Gamma(x,\tau) = e^{\lambda  \frac{2}{2-n} \tau}  \,\bar\Gamma\big( \xi\big) \, , \quad 
 \Sigma(x,\tau) =   e^{\lambda   \big ( -1+\frac{n}{2-n} \big )  \tau}  \,\bar\Sigma\big( \xi\big) \, , 
\label{intro-sstrans}
\end{equation}
where $\bV_\xi = \bU$,  $\xi = x e^{\lambda \tau}$ and   $\lambda >0$ is a parameter.  
Their existence is based on constructing a solution $( \bar{V} , \bar{ \Gamma} , \bar{ \Sigma } )$
for the nonlinear system of singular ordinary differential equations
\begin{equation}
 \lambda \left( \frac{n}{2-n} \bar{V} + \xi \bar{V}_\xi\right) =\bar{\Sigma}_\xi, \, , \quad 
 \lambda \left( \frac{2}{2-n} \bar{ \Gamma} + \xi \bar{ \Gamma}_\xi\right) = \bar{V}_\xi - \bar{ \Gamma } \, , \quad 
 \bar{ \Sigma } = \frac{\bar{V}_\xi^n}{\bar{\Gamma}} \, .
 \label{intro-barsys}
\end{equation}
Such singular systems may (or may not) have solutions and this is determined 
by a case-by-case analysis. One can remarkably de-singularize  \eqref{intro-barsys} and convert the problem of existence of localizing profiles
to that of constructing a suitable heteroclinic orbit for a system of ordinary differential equations (see \eqref{eq:p}-\eqref{eq:r}).
Using a combination of dynamical systems ideas and numerical computation,  we numerically construct the 
heteroclinic connection and show that it  gives rise to a coherent localizing structure. 
The heteroclinic orbit is represented by the red dotted line in Figure \ref{fig:2manifold}.  {  The properties of the localizing solutions are
summarized in Section \ref{sec:localiz}.
}

\section{Description of the model} 
\label{sec:model}

The simplest model for analyzing the dynamics of  shear band formation is the one-dimensional shear deformation of a viscoplastic material that
exhibits strain softening and strain-rate sensitivity. The  motion of the specimen occurs in the $y$-direction, with 
shear direction that of the $x$-axis, and is described by the  \emph{velocity} $ v(x,t)$, \emph{plastic strain} $\gamma(x,t)$ and \emph{stress} $\sigma(x,t)$. 
These field variables satisfy the balance of linear momentum and  the kinematic compatibility equation
\begin{align}
 v_t &= \sigma_x,   \label{eq:v0}
 \\
 \gamma_t &= v_x,   \label{eq:g0}
\end{align}
respectively.
In the simplest situation,  the elastic effects are neglected and one focusses on a viscoplastic model, where the stress
depends only on  the (plastic) strain $\gamma$ and the strain rate $\gamma_t$,
$$
\sigma = f( \gamma, \gamma_t). 
$$
The material  exhibits strain softening when $\displaystyle \frac{\partial f}{ \partial\gamma} < 0$.
A simple constitutive law of that form is
\begin{equation}
\label{softcl}
\sigma = \varphi (\gamma)  (\gamma_t )^n, 
\end{equation}
where $\varphi' (\gamma) < 0$ for strain softening, while the strain-rate sensitivity parameter $n > 0$ is thought
as very small $n \ll 1$.

\medskip
\noindent
{\bf Uniform shearing solutions}. 
The system  \eqref{eq:v0}-\eqref{eq:g0} with \eqref{softcl} admits a special class of solutions describing  uniform shearing, 
\begin{equation} \label{eq:uss}
\begin{split}
 v_s(x) &= x, \quad  u_s (t) =  (\del_x  v_s )  (x,t)  = 1, \\
 \gamma_s (t) &= t + \gamma_0, \\
 \sigma_s (t) &= \varphi\big(t + \gamma_0\big).
\end{split}
\end{equation}
Due to the strain-softening assumption $\varphi' (\gamma) < 0$, the system \eqref{eq:v0}-\eqref{eq:g0}, \eqref{softcl}, with $n = 0$
is an elliptic initial-value problem which is ill-posed exhibiting Hadamard instability. Nevertheless, both the system with $n = 0$ and its
regularized version with $n > 0$ admit the class of the uniform shearing solutions \eqref{eq:uss}. 

\medskip

Our goal is to study the stability  of the uniform shearing solutions in both cases $n = 0$ and $n > 0$.
For the remainder of this work we focus on the particular choice of $\varphi (\gamma) = \frac{1}{\gamma}$, namely
\begin{equation}
\label{specialcl}
 \sigma = \varphi (\gamma)  (\gamma_t )^n  = \gamma^{-1} \gamma_t^n , \qquad n>0.
\end{equation}
The reason for this restriction is the following: The uniform shearing solutions are time-dependent and their analysis (linearization
and nonlinear analysis) leads very quickly to issues with non-autonomous problems. 
The special choice of the constitutive relation \eqref{specialcl} has the property that it leads to autonomous problems for 
its relative perturbation  (see system \eqref{eq:sys1U}-\eqref{eq:sys1G}) and appears to be indicative of the general response in
the unstable regime.

With the choice \eqref{specialcl} system \eqref{eq:v0}-\eqref{eq:g0} reads 
\begin{align}
 v_t &= \bigg( \frac{v_x^n}{ \gamma } \bigg)_x, \label{eq:mombal} \\
 \gamma_t &= v_x.  \label{eq:kinematic}
\end{align}
An initial-boundary problem for \eqref{eq:mombal}-\eqref{eq:kinematic} is considered in $[0,1] \times \mathbb{R}^+$ with the following initial and boundary conditions
\begin{equation}
\label{eq:init_bdry}
\begin{aligned}
 v(x,0) &= v_0(x), \quad \gamma(x,0) = \gamma_0(x), 
 \\
v(0,t) &= 0, \quad v(1,t) = 1.
\end{aligned}
\end{equation}
The boundary condition reflects  imposed boundary shear.
As a consequence of the boundary conditions we have
\begin{equation}
\label{iconstraint}
 \int_0^1 v_x(y,t)\, dy=1 , \quad \text{for all $t>0$.}
\end{equation}

An equivalent formulation of  \eqref{eq:mombal}-\eqref{eq:kinematic} and \eqref{eq:init_bdry}  is obtained by considering the strain rate as the primary 
field variable (replacing the velocity). Introducing the strain rate  $ u = \gamma_t = v_x, $  the system becomes 
\begin{align}
 u_t &= \bigg( \frac{u^n}{ \gamma } \bigg)_{xx}, \label{eq:mombal2} \\
 \gamma_t &= u  . \label{eq:kinematic2}
\end{align}
The corresponding initial conditions are
\begin{align}
 u(x,0) = u_0(x), \quad \gamma(x,0) = \gamma_0(x). \label{eq:init2}
\end{align}
Note that the boundary conditions on $v$ and the compatibility condition \eqref{iconstraint} imply that
\begin{align}
 & \bigg( \frac{u^n}{ \gamma } \bigg)_x(0,t) = 0, \quad \bigg( \frac{u^n}{ \gamma } \bigg)_x(1,t) = 0,  \label{eq:bdry2} \\
 & \int_0^1 u(x,t) \, dx = 1, \quad \text{for all $t>0$.} \label{eq:constraint}
\end{align}
The resulting initial-boundary problem consists of \eqref{eq:mombal2}-\eqref{eq:kinematic2} subject to  \eqref{eq:init2}-\eqref{eq:bdry2}. The constraint \eqref{eq:constraint}
is inherited from the initial data due to the conservation of $\int_0^1 u\, dx$. It is apparent from \eqref{eq:mombal2} that there is a diffusion mechanism
in the strain-rate, manifesting the effect of strain-rate sensitivity.

\section{Stability Analysis} \label{sec:stability}

\subsection{Relative Perturbations} \label{sec:relative}

{  Motivated by the form of the uniform shearing solutions \eqref{eq:uss} and \cite{FM87,MC87}, one may introduce a rescaling of the dependent variables 
and time in the following form:}
\begin{equation}
\label{rescale}
\begin{aligned}
u(x,t) &= u_s(x) \, U(x,\tau(t)) = U(x, \tau(t)), \\
 \gamma(x,t) &= \gamma_s(t) \, \Gamma(x, \tau(t)) = \big( t + \gamma_0\big)\Gamma(x, \tau(t)), \\
 \sigma(x,t) &= \sigma_s(t) \, \Sigma(x, \tau(t)) = \big( \frac{1}{t + \gamma_0}\big)\Sigma(x, \tau(t)), \\
\dot{\tau}(t) &= \frac{1}{t + \gamma_0}, \quad\tau(0) = 0 \quad\Longrightarrow \quad \tau(t) = \log(1+ \frac{t}{\gamma_0}).
\end{aligned}
\end{equation}
In \eqref{rescale} we select
\begin{equation}
\label{defgamma0}
\gamma_0 := \int_0^1 \gamma(x,0) \, dx
\end{equation}
which normalizes the initial relative perturbation
\begin{equation}
\label{initgamma}
\int_0^1 \Gamma (x, 0 ) \, dx = 1 \, .
\end{equation}

From  \eqref{eq:mombal2}-\eqref{eq:kinematic2} we see that the new field variables satisfy,
\begin{align}
 U_\tau &= \Sigma_{xx} =\bigg(\frac{U^n}{\Gamma}\bigg)_{xx}, \label{eq:sys1U}\\
 \Gamma_\tau &= U- \Gamma. \label{eq:sys1G} 
\end{align}
The boundary condition \eqref{eq:bdry2} together with  \eqref{eq:constraint}, \eqref{initgamma} and the equations \eqref{rescale}, \eqref{eq:sys1U},
\eqref{eq:sys1G} imply the restrictions
\begin{equation}
\label{constraintUG}
\int_0^1 U (x, t) \, dx = \int_0^1 \Gamma (x,t) \, dx = 1  \quad \mbox{for $t > 0$} .
\end{equation}
Note that under the transformation \eqref{rescale} the uniform shearing motion   is transformed 
to an equilibrium
\begin{equation}
 U\equiv1, \quad \Gamma\equiv 1, \quad \Sigma\equiv 1, \label{eq:rel_uniform}
\end{equation}
 for the transformed problem \eqref{eq:sys1U}-\eqref{eq:sys1G}. Despite the fact the solution \eqref{eq:uss} is 
 time dependent, the transformed problem is still autonomous. In fact, the latter property is the reason for restricting
 to the constitutive class \eqref{specialcl}, so that spectral analysis can be  used to obtain information for the linearized problem.

\subsection{Linearized stability analysis}  \label{sec:linear}
In order to assess the growth or decay of perturbations of the uniform shearing solutions, we linearize 
the system  \eqref{eq:sys1U}-\eqref{eq:sys1G} around  \eqref{eq:rel_uniform}. 
Let $\delta \ll 1$ be a small parameter (describing the size of the perturbation) and consider  the asymptotic expansions
\begin{align*}
 U &= 1 + \delta \tilde U + O(\delta^2), \\
 \Gamma &= 1 + \delta \tilde{ \Gamma } + O(\delta^2), \\
 \Sigma &= 1 + \delta \tilde{ \Sigma } + O(\delta^2).
\end{align*}
By neglecting $O(\delta^2)$-terms, we obtain from \eqref{eq:sys1U}-\eqref{eq:sys1G} the linearized system
\begin{align}
 \tilde{U}_\tau &= (n\tilde{U} - \tilde{ \Gamma })_{xx}, \label{eq:linU}\\
 \tilde{ \Gamma }_\tau &= \tilde{U} - \tilde{ \Gamma }. \label{eq:linG}
\end{align}
The boundary condition \eqref{eq:bdry2} and the constraint  \eqref{constraintUG} imply
\begin{align}
  (n\tilde{U} - \tilde{ \Gamma })_{x}(0,t) = (n\tilde{U} - \tilde{ \Gamma })_{x}(1,t) = 0, \label{eq:pert_bdry}\\
 \int_0^1 \tilde \Gamma (x, t) \, dx = \int_0^1 \tilde{U}(x,t) \, dx = 0 \quad \text{for all $t>0$}.  \label{eq:pert_constraint} 
\end{align}

Growth and decay modes of the linearized system \eqref{eq:linU}-\eqref{eq:linG} can be captured via spectral analysis. 
Let us assume the even extension of $\tilde{ \Sigma } = n\tilde{U} - \tilde{ \Gamma }$ in $[-1,1]$, which is compatible with \eqref{eq:pert_bdry} and consider a cosine series expansion of $\tilde{ \Sigma }$,
\begin{align*}
 \tilde{\Sigma}(x,\tau) &= \hat{\Sigma}_0(\tau) + \sum_{j=1}^{\infty} \hat{\Sigma}_j(\tau) \cos (j\pi x).
\end{align*}
In view of \eqref{eq:linU},  
we have also
\begin{equation}
\label{modes}
\begin{aligned}
 \tilde{U}(x,\tau) &= \hat{U}_0(\tau) + \sum_{j=1}^{\infty} \hat{U}_j(\tau) \cos (j\pi x), \\
 \tilde{ \Gamma }(x,\tau) &= \hat{ \Gamma }_0(\tau) + \sum_{j=1}^{\infty} \hat{ \Gamma }_j(\tau) \cos (j\pi x).
\end{aligned}
\end{equation}
Then the  coefficients $( \hat{U}_j,  \hat{ \Gamma }_j)$ satisfy 
\begin{align}
 \begin{pmatrix}
  \hat{U}_j \\
  \hat{ \Gamma }_j
 \end{pmatrix}^\prime
 = 
 \begin{pmatrix}
  -nj^2\pi^2 & j^2\pi^2 \\
  1 & -1
 \end{pmatrix}
 \begin{pmatrix}
  \hat{U}_j \\
  \hat{ \Gamma }_j
 \end{pmatrix}, \qquad j \ge 0. \label{eq:matrix}
\end{align}
The characteristic polynomial of the coefficient matrix  is 
\begin{equation*}
 \lambda_j^2 + \lambda_j(1+n\pi^2j^2) - (1-n)\pi^2j^2 = 0,
\end{equation*}
with  discriminant 
$$
\Delta = (1+nj^2\pi^2)^2+4(1-n)j^2\pi^2 > 0, \quad 0<n<1.
$$
Thus two eigenvalues are real and 
$$
\lambda_{j,1} \lambda_{j,2} = - (1-n)\pi^2j^2 < 0.  
$$
Hence,  for $0\le n<1$, $j>0$ there is always one negative and one positive eigenvalue. The eigenvalues can be computed explicitly
\begin{align}
  \lambda^{\pm}_j &= \frac{1}{2}\Big(-(1+nj^2\pi^2)\pm\sqrt{(1+nj^2\pi^2)^2+4(1-n)j^2\pi^2}\Big). \label{eq:eigv}
\end{align}
The mode $j=0$ needs special attention. In this case the eigenvalues are $\lambda_{0,1}=0$ and $\lambda_{0,2}=-1$. Due to the constraint \eqref{eq:pert_constraint}, we have
\begin{align*}
& \hat{U}_0(\tau) \equiv 0, \\
& \hat{ \Gamma }_0^\prime(\tau) = - \hat{ \Gamma }_0(\tau),
\end{align*} 
thus the zero-th mode decays exponentially to zero.  We summarize the result:

\medskip
\noindent
{\bf Case 1.  $n=0$: Hadamard Instability}
\hfill\break
The eigenvalues are
  \begin{align}
    \lambda^{\pm}_j &= \frac{1}{2}\left(-1\pm \sqrt{1+4j^2\pi^2}\right) \, ,
    \quad \mbox{ $j \ge 0$},
     \label{eq:eigv_n0}
  \end{align}
  and satisfy $\lambda_j^+ > 0$ for $j >  0$ and $\lambda_j^- < 0$ for $j \ge 0$.

 Using the Taylor series expansion, $\sqrt{1+x} = 1 + \frac{1}{2}x - \frac{1}{4}x^2 + \frac{3}{8}x^3 + \cdots$, 
the leading order terms of \eqref{eq:eigv_n0}, are 
\begin{align*}
 \lambda^{\pm}_j &= \pm \pi j - \frac{1}{2} \pm \frac{1}{8\pi j} \mp \frac{1}{64\pi^3j^3} + O( \frac{1}{j^5}),
\end{align*}
so $\lambda^+_j$ increases linearly as $j \rightarrow \infty$ and is thus unbounded.
This kind of catastrophic instability, called \emph{Hadamard} instability,  is associated with the strain softening behavior and is 
typical in initial value problems of elliptic equations.

\medskip
\noindent
{\bf Case 2.  $n>0$: Turing Instability}  \hfill\break
Strain-rate dependence provides a diffusive mechanism, which moderates but does not entirely suppress the instability, 
as can be seen by the following lemma. The behavior in this regime is that of Turing instability 

\begin{lemma} \label{lem:turing}
For  $0<n<1$, the eigenvalues are
  \begin{align*}
    \lambda^+_j &= \frac{1}{2}\left(-(1+nj^2\pi^2)+\sqrt{(1+nj^2\pi^2)^2+4(1-n)j^2\pi^2}\right), \\
    \lambda^-_j &= \frac{1}{2}\left(-(1+nj^2\pi^2)-\sqrt{(1+nj^2\pi^2)^2+4(1-n)j^2\pi^2}\right).
  \end{align*}
and satisfy the properties
  \begin{enumerate}[(i)]
    \item $\lambda^+_j$ is increasing in $j$,
    \item $\lambda^+_j < \frac{1-n}{n}$, 
    \item $\lambda^+_j \rightarrow \frac{1-n}{n}$ \quad as\quad $j \rightarrow \infty$.
  \end{enumerate}
\end{lemma}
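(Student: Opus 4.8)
The plan is to analyze the explicit formula for $\lambda_j^+$ directly, treating $j$ as a continuous real variable $j \in (0,\infty)$ so that calculus tools apply; the stated properties for integer $j$ follow immediately. Write $s = j^2\pi^2 \ge 0$, which is an increasing function of $j$, and set
\[
 g(s) = \tfrac12\left( -(1+ns) + \sqrt{(1+ns)^2 + 4(1-n)s}\,\right),
\]
so that $\lambda_j^+ = g(j^2\pi^2)$. Since $s \mapsto j^2\pi^2$ is increasing, it suffices to prove that $g$ is increasing on $[0,\infty)$, that $g(s) < \frac{1-n}{n}$ for all $s \ge 0$, and that $g(s) \to \frac{1-n}{n}$ as $s \to \infty$; these give (i), (ii), (iii) respectively.

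For the monotonicity (i), I would avoid differentiating the square root and instead use the characteristic equation itself: $\lambda = g(s)$ is the positive root of $P(\lambda,s) := \lambda^2 + \lambda(1+ns) - (1-n)s = 0$. Implicit differentiation gives $\frac{d\lambda}{ds} = -\frac{\partial_s P}{\partial_\lambda P} = -\frac{n\lambda - (1-n)}{2\lambda + 1 + ns}$. The denominator $2\lambda + 1 + ns$ is positive since $\lambda > 0$ and $s \ge 0$. For the numerator, note that at $s=0$ we have $\lambda = 0$, hence $n\lambda - (1-n) = -(1-n) < 0$; so if we can show $n\lambda(s) < 1-n$ for all $s$, i.e. exactly property (ii), then $\frac{d\lambda}{ds} > 0$ everywhere. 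Thus (i) reduces to (ii), which is the real content.

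To prove (ii), I would argue by a comparison/sign argument on $P$. The positive root $g(s)$ of the upward-opening parabola $\lambda \mapsto P(\lambda,s)$ satisfies $g(s) < c$ if and only if $P(c,s) > 0$ (since $P(0,s) = -(1-n)s \le 0$ means $0$ lies between the two roots for $s>0$, so the positive root is to the right of $0$; and $P$ is increasing past its positive root). Evaluate at $c = \frac{1-n}{n}$:
\[
 P\!\left(\tfrac{1-n}{n}, s\right) = \tfrac{(1-n)^2}{n^2} + \tfrac{1-n}{n}(1+ns) - (1-n)s = \tfrac{(1-n)^2}{n^2} + \tfrac{1-n}{n} + (1-n)s - (1-n)s = \tfrac{(1-n)^2}{n^2} + \tfrac{1-n}{n} > 0
\]
for $0 < n < 1$, where the $(1-n)s$ terms cancel exactly. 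Hence $g(s) < \frac{1-n}{n}$ for all $s \ge 0$, giving (ii) (and closing the loop for (i)). For (iii), I would factor out the growing term: as $s \to \infty$,
\[
 g(s) = \tfrac12(1+ns)\left(-1 + \sqrt{1 + \tfrac{4(1-n)s}{(1+ns)^2}}\,\right) \sim \tfrac12(1+ns)\cdot \tfrac{2(1-n)s}{(1+ns)^2} = \tfrac{(1-n)s}{1+ns} \to \tfrac{1-n}{n},
\]
using $\sqrt{1+x} - 1 \sim x/2$ as $x\to 0$ and $\frac{4(1-n)s}{(1+ns)^2} \to 0$. This is a routine limit computation; the one place to be slightly careful is justifying the asymptotic equivalence rigorously (e.g. by the bound $0 \le \sqrt{1+x}-1 \le x/2$), but no genuine obstacle arises. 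The only mild subtlety in the whole argument is property (ii); once the exact cancellation of the $(1-n)s$ terms in $P(\frac{1-n}{n},s)$ is spotted, everything else is immediate, and monotonicity (i) follows for free.
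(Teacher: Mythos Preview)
Your proof is correct and follows essentially the same architecture as the paper's: both reduce monotonicity (i) to the bound (ii) via implicit differentiation of the characteristic equation, and both handle (iii) by the same asymptotic expansion after factoring out $1+ns$. The only minor difference is in establishing (ii): the paper applies the inequality $\sqrt{1+z}<1+\tfrac{z}{2}$ to obtain $\lambda_j^+<\frac{(1-n)s}{1+ns}<\frac{1-n}{n}$, whereas you check the sign of the characteristic polynomial at $\lambda=\frac{1-n}{n}$ directly and observe the exact cancellation of the $(1-n)s$ terms; both arguments are equally elementary and your version makes transparent why $\frac{1-n}{n}$ is precisely the supremum.
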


\begin{proof}
  Let $x=\pi j$ and write
  \begin{align*}
   \lambda^+_j(x) &= \frac{1}{2}(1+nx^2) \Bigg(-1+\sqrt{1 + \frac{4(1-n)x^2}{(1+nx^2)^2}}\Bigg) .
  \end{align*}  
  Since $\sqrt{1+z} < 1+ \frac{1}{2}z$ for $z>0$, we have 
  $$ \sqrt{1 + \frac{4(1-n)x^2}{(1+nx^2)^2}} < 1 + \frac{2(1-n)x^2}{(1+nx^2)^2},$$
  and hence
  \begin{equation} \label{eq:upperbound}
 \lambda^+_j(x) < \frac{(1-n)x^2}{1+nx^2} = \frac{1-n}{n + \frac{1}{x^2}} < \frac{1-n}{n} ,
  \end{equation}
  which proves $({\romannumeral 2})$.  The eigenvalue $\lambda^+_j(x)$ satisfies 
  $$ \lambda^+_j(x)^2 + \lambda^+_j(x)(1+nx^2) - (1-n)x^2 = 0.$$
  Differentiation with respect to $x$ gives
  \begin{gather*}
   (\lambda^+_j)^\prime(x) = 2x \frac{ (1-n) - n\lambda^+_j(x)}{2\lambda^+_j(x) + (1+nx^2)},
  \end{gather*}
  and the upper bound \eqref{eq:upperbound} of $\lambda^+_j(x)$ proves $({\romannumeral 1})$. 
  Further, the asymptotic expansion
  \begin{align*}
   \lambda^+_j(x)  
   &= \frac{1}{2} (1+nx^2)\bigg(\frac{1}{2}\frac{4(1-n)x^2}{(1+nx^2)^2} - \frac{1}{4}\Big(\frac{4(1-n)x^2}{(1+nx^2)^2}\Big)^2 + \cdots\bigg)\\
   &=\frac{1-n}{n + \frac{1}{x^2}} + O( \frac{1}{x^2} ) \quad \text{as $x \rightarrow \infty$},
  \end{align*}
  yields $({\romannumeral 3})$.
\end{proof}

{  
\begin{remark} \label{remarkTuring} \rm
(i)  In view of \eqref{rescale}, positive eigenvalues imply linearized instability and negative linearized stability. 
Invoking that $\tau = \log(1+ \frac{t}{\gamma_0})$, Lemma \ref{lem:turing} implies that the rates are of polynomial order and  the precise
rate of growth or decay can be easily computed.
The upper bound of growth rate indicates that the rate of growth is bounded and the bound is proportional to $\tfrac{1}{n}$.
By \eqref{modes}, the perturbations exhibit oscillatory response.

(ii)  The instability arising  in the  case $0 < n <1$ is a {\it Turing}-type instability, namely, the combined effect of two different stabilizing mechanisms 
leads to an instability: Note that the coefficient matrix of \eqref{eq:matrix} can be decomposed as  
\begin{align*}
  \begin{pmatrix}
  -n j^2\pi^2 & j^2\pi^2 \\
  1 & -1
 \end{pmatrix} =   
 \begin{pmatrix}
 -n j^2\pi^2 & j^2\pi^2 \\
  0 & 0
 \end{pmatrix} +
  \begin{pmatrix}
  0 & 0  \\
  1 & -1
 \end{pmatrix}.
\end{align*}
Taking note of \eqref{eq:pert_constraint} both matrices on  the right-hand-side give rise to marginally stable systems for the eigenmodes; 
however, the  matrix obtained as the sum of the two 
is equiped with a family of strictly positive eigenvalues. This decomposition corresponds to visualizing the linearized problem \eqref{eq:linU}-\eqref{eq:linG}
as the sum of two problems
\begin{equation}
\left \{ 
\begin{aligned}
 \tilde{U}_\tau &= (n\tilde{U} - \tilde{ \Gamma })_{xx},   \\
 \tilde{ \Gamma }_\tau &=  0  \end{aligned}
\right .
\quad \mbox{and} \quad 
\left \{
\begin{aligned}
 \tilde{U}_\tau &= 0 \, ,   \\
 \tilde{ \Gamma }_\tau &= \tilde{U} - \tilde{ \Gamma }. 
 \end{aligned}
 \right .
\end{equation}
which, subject to the restriction \eqref{eq:pert_constraint}, are both marginally stable. 
The stabilizing mechanism in the first one is viscosity, while in the second one are the inertial effects
(stemming from the fact that excess growth is needed to overcome the uniform shear solution). This perspective clarifies the stabilizing
mechanisms present in this problem; still, the prime driver of instability is the softening response of the system that effects the coupling of the two systems.

\end{remark}
}

\section{Derivation of an effective nonlinear equation} \label{sec:coarse_scale}

The analysis in Section \ref{sec:linear} concerns the behavior of the linearized problem and captures the onset of instability. 
Focusing next in the nonlinear regime, we devise an asymptotic criterion, in the
spirit of \cite{KT09},  for the onset of localization.
The goal is to derive an effective equation for the evolutions of $U$ and $\Gamma$ valid at a coarse space and time scale. To
this end, we consider the rescaling of independent variables
$$ s= \epsilon\tau, \qquad y=\sqrt{ \epsilon }\,x, $$
and rewrite \eqref{eq:sys1U}-\eqref{eq:sys1G} as
\begin{align}
 {U}_s &= \bigg( \frac{ {U}^n }{ \Gamma }\bigg)_{yy}, \label{eq:coarseU}\\
 \epsilon \Gamma_s &= U- \Gamma. \label{eq:coarseG}
\end{align}
For small values of $\epsilon$, we can view \eqref{eq:coarseU} as a moment equation and equation \eqref{eq:coarseG} as a relaxation process towards  the equilibrium curve 
$$
\Gamma=U, \quad \Sigma = U^{n-1}.
$$

We are interested in calculating the equation describing the effective response of \eqref{eq:coarseU}, \eqref{eq:coarseG} for $\epsilon$ sufficiently small. We consider a Chapman-Enskog type expansion with $\epsilon \ll 1$ for the field variables
\begin{align*}
 U &= U_0 + \epsilon U_1 + O(\epsilon^2), \\
 \Gamma &= \Gamma_0 + \epsilon \Gamma_1  + O(\epsilon^2).
\end{align*}
Then we have 
\begin{align*}
  U^n &= U_0^n\big(1 + \epsilon \frac{U_1}{U_0} + O( \epsilon^2 )\big)^n = U_0^n\bigg(1+ n \epsilon \frac{U_1}{U_0}\bigg) + O( \epsilon^2 ),\\
  \frac{1}{ \Gamma } &= \frac{1}{ \Gamma_0 } \frac{1}{1+ \epsilon \frac{ \Gamma_1 }{ \Gamma_0 } + O( \epsilon^2 )} = \frac{1}{ \Gamma_0 }\bigg(1- \epsilon\frac{ \Gamma_1 }{ \Gamma_0 }\bigg) + O( \epsilon^2 ),\\
  \frac{U^n}{ \Gamma } &= \frac{U_0^n}{ \Gamma_0 } + \epsilon \frac{U_0^n}{ \Gamma_0 }\bigg( n\frac{U_1}{U_0}  - \frac{ \Gamma_1} { \Gamma_0} \bigg) + O( \epsilon^2 ) \\
  &= \frac{U_0^n}{ \Gamma_0 }\bigg(1 + \epsilon \bigg( -(1-n)\frac{U_1}{U_0} + \frac{U_1}{U_0}  - \frac{ \Gamma_1} { \Gamma_0} \bigg)\bigg) + O( \epsilon^2 ).
\end{align*}
From \eqref{eq:coarseU} and \eqref{eq:coarseG}, collecting together the $O(1)$-terms, we have
\begin{subequations}
\begin{align}
 &U_0 - \Gamma_0 = 0, \\
 &\partial_s U_0 = \bigg(\frac{U_0^n}{ \Gamma_0 }\bigg)_{yy} = \big(U_0^{-(1-n)}\big)_{yy}. \label{eff0}
 \end{align}
\end{subequations}
This implies that the equation describing the effective dynamics at the order $O(\eps)$ is \eqref{eff0}. 
Since $0<n<1$, equation \eqref{eff0} is a backward parabolic equation. 
On the one hand, this indicates instability, on the other the asymptotic procedure will cease to be a good approximation at
this order $O(\eps)$.

We thus proceed to calculate the effective equation at the order $O(\eps^2)$.
Collecting together the $ \epsilon$-terms, we obtain
\begin{align*}
  \partial_s \Gamma_0 &= \partial_s U_0 = U_1 - \Gamma_1, \\
  \partial_s U_1 &= \bigg(\frac{U_0^n}{ \Gamma_0 }\bigg( -(1-n)\frac{U_1}{U_0} + \frac{U_1}{U_0}  - \frac{ \Gamma_1} { \Gamma_0} \bigg) \bigg)_{yy} \\
   &=\bigg(U_0^{-(1-n)}\bigg( -(1-n)\frac{U_1}{U_0} + \frac{1}{U_0} \Big(U_0^{-(1-n)}\Big)_{yy}\bigg) \bigg)_{yy}\\
   &=\bigg(U_0^{-(1-n)}\bigg( -(1-n)\frac{U_1}{U_0}\bigg)\bigg)_{yy} + \bigg(U_0^{-(2-n)}\bigg( \big(U_0^{-(1-n)}\big)_{yy}\bigg) \bigg)_{yy}.
\end{align*}
Hence,
\begin{align*}
 \partial_s U &= \partial_s U_0 + \epsilon\partial_s U_1 + O( \epsilon^2)\\
 &= \bigg(U_0^{-(1-n)}\bigg(1 - \epsilon(1-n)\frac{U_1}{U_0}\bigg)\bigg)_{yy} + \epsilon\bigg(U_0^{-(2-n)}\bigg( \big(U_0^{-(1-n)}\big)_{yy}\bigg) \bigg)_{yy} + O( \epsilon^2 )\\
 &=\bigg((U_0 + \epsilon U_1)^{-(1-n)}\bigg)_{yy} + \epsilon\bigg((U_0 + \epsilon U_1)^{-(2-n)}\bigg( \big((U_0 + \epsilon U_1)^{-(1-n)}\big)_{yy}\bigg) \bigg)_{yy} + O( \epsilon^2 )\\
 &=\bigg(U^{-(1-n)}\bigg)_{yy} + \epsilon\bigg(U^{-(2-n)}\bigg( \big(U^{-(1-n)}\big)_{yy}\bigg) \bigg)_{yy} + O( \epsilon^2 ).
\end{align*}
We thus obtain an effective equation for $U$  which captures the effective response of the system up to order $O(\epsilon^2)$,
\begin{equation}
 U_s = \bigg( \frac{1}{U^{1-n}} \bigg)_{yy} + \epsilon\bigg( \frac{1}{U^{2-n}} \bigg( \frac{1}{U^{1-n}} \bigg)_{yy} \bigg)_{yy}. \label{eq:effU}
\end{equation}

The leading term in \eqref{eq:effU} is backward parabolic while the first order correction is a fourth order.  The fourth order term introduces a  stabilizing mechanism to  the equation, in the sense that the linearized equation around the equilibrium $U = 1$ is stable. To see that,  write $U = 1 +  \tilde{U}$, 
and compute the linearized equation satisfied by  $ \tilde{U}$. This has the form
\begin{align}
\label{leff}
 \tilde{U}_s = -(1-n)\tilde{U}_{yy} - \epsilon\Big((2-n)\tilde{U}_{yy} + (1-n)\tilde{U}_{yyyy}\Big).  
\end{align}
To study the stability properties of \eqref{leff} we take the Fourier transform and obtain
 \begin{align*}
 \hat{U}_s = \Big(\big((1-n) + \epsilon(2-n)\big)\xi^2 - \epsilon(1-n)\xi^4\Big)\hat{U}.  
\end{align*}
We can see that the right-hand-side becomes negative with high frequency $\xi > \sqrt{ 1+ \frac{1}{ \epsilon} + \frac{1}{1-n}}$ due to the fourth order term. The fourth order term acts as a stabilizing mechanism.

\section{Emergence of Localization}\label{sec:localization}

Experimental observations of shear bands indicate that the development of shear localization proceeds in a fast but organized and coherent fashion.
It is not associated with oscillatory response, rather a coherent structure emerges from the process leading to strain localization. The reader should note
that the unstable modes of the linearized problem \eqref{eq:linU}-\eqref{eq:linG} for $0< n < 1$ contain oscillatory modes; such individual oscillatory modes are not observed in
experiments. The conjecture is that the combined effect of instability and nonlinearity suppresses the oscillations and results in a ``runaway" of the strain
at a point. In this section, we will construct a class of self-similar solutions that captures this process for the system \eqref{eq:mombal2}-\eqref{eq:kinematic2}.

The goal is to exploit the invariance properties of the system \eqref{eq:sys1U}-\eqref{eq:sys1G} in order to construct self-similar focusing solutions
that depict the initial stage of localization. We focus on values of the parameter space $0<n<1$ and carry out the following steps : 

\begin{itemize}
\item[(i)] In Section \ref{sec:emergence}, we consider  a variant of \eqref{eq:sys1U}-\eqref{eq:sys1G} and show that  its invariance 
properties suggest a similarity class of solutions of the form \eqref{eq:selfsimilar} with the profile $\big(\bar{V}, \bar{ \Gamma }, \bar{ \Sigma }\big)$
in  \eqref{eq:selfsimilar} satisfying \eqref{eq:barsysv}-\eqref{eq:barsyss}.

\item[(ii)] In Section \ref{sec:assds}, the system \eqref{eq:barsysv}-\eqref{eq:barsyss} is de-singularized and  transformed 
 to a system of three autonomous differential equations \eqref{eq:p}-\eqref{eq:r}. The profile 
$\big(\bar{V}, \bar{ \Gamma }, \bar{ \Sigma }\big)$ is transformed to a heteroclinic connection for the orbit $(p,q,r)$ of \eqref{eq:p}-\eqref{eq:r}.

\item[(iii)] In Section \ref{sec:equilibria}, we identify the equilibria $M_i$, $i =0, 1, 2, 3$, of the system \eqref{eq:p}-\eqref{eq:r}
and calculate their corresponding stable and unstable manifolds $W^u(M_i)$ and $W^s(M_i)$.

\item[(iv)] In Section \ref{sec:hetero}, we probe the possibilities for heteroclinic connections between the equilibria, and  
characterize our target orbit  as a connection  joining $M_0$ and $M_1$ (following  $\vec{X}_{02}$ near $M_0$). 

\item[(v)]  The  heteroclinic connection is computed numerically  in Section \ref{sec:numerical_const} as an orbit between $M_0$ and $M_1$.  
The implications on localizing solutions are summarized in Section \ref{sec:localiz}.

\end{itemize}


\subsection{Scale invariance and a class of similarity solutions} \label{sec:emergence}
We return to the system \eqref{eq:mombal}-\eqref{eq:kinematic}, restrict henceforth to $0<n<1$, 
and consider it in the entire domain $(x,t) \in \mathbb{R}\times [0,+\infty)$,
with the objective to construct a special class of solutions. Upon introducing the change of variables \eqref{rescale}, we obtain 
\begin{align}
 V_\tau &= \Sigma_{x}, \label{eq:sysV}\\ 
 \Gamma_\tau &= V_x- \Gamma, \label{eq:sysG}\\
 \Sigma &= \frac{(V_x)^n}{\Gamma}, \label{eq:sysS} 
\end{align}
where $V(x, \tau(t)) = v(x,t)$, and the last system is considered for $(x,\tau) \in \mathbb{R}\times [0,+\infty)$. 
Equation \eqref{eq:sysS} may be expressed as
\begin{equation*}
 \begin{split}
  V_x &= \big(\Sigma \Gamma)^{ \frac{1}{n} }, \quad \text{if $n>0$}, \\
  \Sigma &= \frac{1}{ \Gamma }, \quad \text{if $n=0$}.
 \end{split}
\end{equation*}
The system \eqref{eq:sysV}-\eqref{eq:sysS} is scale invariant. Indeed, given  $\big(V(x,\tau), \Gamma(x,\tau),\Sigma(x,\tau)\big)$, one verifies that
$\big(V_A(x,\tau), \Gamma_A(x,\tau), \Sigma_A(x,\tau)\big)$ defined by
\begin{equation} \label{eq:inv}
\begin{split}
  V_A(x,\tau) &= A^{ \frac{n}{2-n}} \,V( A x, \tau), \\
  \Gamma_A(x,\tau) &= A^{ \frac{2}{2-n}} \,\Gamma( A x, \tau),\\
 \Sigma_A(x,\tau) &= A^{ -1+\frac{n}{2-n}} \,\Sigma( A x, \tau),
\end{split}
\end{equation}
satisfies again \eqref{eq:sysV}-\eqref{eq:sysS}.
Note that the time $\tau$ is not rescaled here. 
The scaling invariance property motivates to seek for self-similar solutions of \eqref{eq:sysV}-\eqref{eq:sysS} in the form
\begin{equation} \label{eq:selfsimilar}
\begin{split}
  V(x,\tau) &= \phi_{\lambda}(\tau)^{ \frac{n}{2-n}} \,\bar{V}\big( \xi\big), \\
  \Gamma(x,\tau) &= \phi_{\lambda}(\tau)^{ \frac{2}{2-n}} \,\bar\Gamma\big( \xi\big),\\
 \Sigma(x,\tau) &= \phi_{\lambda}(\tau)^{ -1+\frac{n}{2-n}} \,\bar\Sigma\big( \xi\big),
\end{split}
\end{equation}
functions of the similarity variable
\begin{equation*}
 \xi = x\,\phi_{\lambda}(\tau),
\end{equation*}
where $\phi_{\lambda}(\tau)$ is specified later. (The format of \eqref{eq:selfsimilar} is suggested by setting $A \to \phi_\lambda (\tau)$ in \eqref{eq:inv}
thus exploiting the property that rescalings of $\tau$ do not enter in the scale invariance property \eqref{eq:inv}).

With this form of solutions, the problem of localization is transformed to the problem of construction of an appropriate self-similar ``profile" functions
$\big(\bar{V}, \bar{ \Gamma }, \bar{ \Sigma }\big)$. The reason is the following: Suppose a smooth profile $\big(\bar{V}, \bar{ \Gamma }, \bar{ \Sigma }\big)$ 
is constructed associated to an increasing function $\phi_{\lambda}(\tau)$ satisfying $\phi_{\lambda}(\tau) \to \infty$ as $\tau \to \infty$.
Then \eqref{eq:selfsimilar} will describe a coherent localizing structure where  $\big(\bar{V}, \bar{ \Gamma }, \bar{ \Sigma }\big)$ will depict the profile
of that structure. Indeed, if for example $\bar\Gamma$ is a bell-shaped even profile, the transformation of variables  will have the effect of narrowing the width 
and increasing the height of the profile as time increases. Eventually it forms a singularity at $x=0$ as $\tau \to +\infty$.

A simple calculation shows that the functions  $\big(\bar{V}, \bar{ \Gamma }, \bar{ \Sigma }\big)$ satisfy a system of ordinary differential equations,
\begin{align}
 \lambda \left( \frac{n}{2-n} \bar{V} + \xi \bar{V}_\xi\right) &=\bar{\Sigma}_\xi,  \label{eq:barsysv}\\
 \lambda \left( \frac{2}{2-n} \bar{ \Gamma} + \xi \bar{ \Gamma}_\xi\right) &= \bar{V}_\xi - \bar{ \Gamma }, \label{eq:barsysg} \\
 \bar{ \Sigma } &= \frac{\bar{V}_\xi^n}{\bar{\Gamma}}, \label{eq:barsyss} 
\end{align}
where we have selected $ \lambda:=\frac{\dot{\phi}_\lambda (\tau)}{\phi_\lambda (\tau) }$ independently of $\tau$, or equivalently
\begin{equation*}
  \phi_{\lambda}(\tau)=e^{\lambda \tau}, ~~ \text{with \; $\phi_\lambda (0)=1$}.
\end{equation*}
In this ansatz the parameter  $\lambda$ parametrizes the focusing rates of growth and narrowing during the  developing  localizing structure. 
Invoking $\tau = \log(1+ \frac{t}{\gamma_0})$, we see that $\phi_{ \lambda }(\tau(t)) = (1+ \frac{t}{\gamma_0})^\lambda$ and thus we are seeking a family of solutions
growing at a polynomial order.

The system \eqref{eq:barsysv}-\eqref{eq:barsyss} is solved subject to initial data
\begin{align}
\Gamma (0)  &= \bar \Gamma_0
\label{eq:barcond2}
\\
\bar U (0) = \bar V_\xi (0) &= \bar U_0 \, ,   \quad \bar \Sigma (0) = \bar \Sigma_0 \, .
\label{barid}
\end{align} 
Because the problem is singular the values of the data cannot be independent and \eqref{eq:barsysg}-\eqref{eq:barsyss} imply
the compatibility conditions
\begin{align}
 \bar \Sigma_0 = \frac{ {\bar U_0}^n }{ \bar \Gamma_0},
 \label{idcompat1}
 \\
\left ( 1 +  \lambda \tfrac{2}{2-n}  \right ) \bar \Gamma_0  = \bar U_0 \, .
\label{idcompat2}
\end{align}
 Therefore, among the five parameters $n$, $\lambda$, $\bar \Gamma_0$, $\bar U_0$ and $\bar \Sigma_0$, involved in determining these solutions,
 only three are independent. We adopt the view that $n$ and the rate $\lambda$ precise the system \eqref{eq:barsysv}-\eqref{eq:barsyss}, while
 one parameter (say) $\bar \Gamma_0$ reflects choice of the initial data.

Further inspection of \eqref{eq:barsysv}-\eqref{eq:barsyss} shows that if $(\bar{V}(\xi), ~\bar{\Gamma}(\xi),~ \bar{\Sigma}(\xi))$ is a solution, then $(-\bar{V}(-\xi), ~\bar{\Gamma}(-\xi),~ \bar{\Sigma}(-\xi))$  is also a solution. This symmetry implies $\bar{V}(\xi)$  is odd,  $\bar{ \Gamma}(\xi)$ and $\bar\Sigma(\xi)$  are even, and suggests to impose
\begin{align}
 \bar{V}(0) = 0, \qquad \bar\Gamma_\xi(0) = 0, \qquad \bar\Sigma_\xi(0) = 0. \label{eq:barcond1}
 \end{align}
In summary, we seek a family of solutions to \eqref{eq:barsysv}-\eqref{eq:barsyss} in the half space $\xi \in \mathbb{R}^+$ 
subject to the initial conditions \eqref{eq:barcond2} and \eqref{eq:barcond1}.


The scale invariance property of \eqref{eq:sysV}-\eqref{eq:sysS} is inherited by the system \eqref{eq:barsysv}-\eqref{eq:barsyss}:
\begin{equation} \label{eq:inv_bar}
\begin{split}
  \bar{V}_A(\xi) &= A^{\frac{n}{2-n}}\,\bar{V}\big( A{\xi} \big),  \\
  \bar{ \Gamma }_A(\xi) &= A^{\frac{2}{2-n}}\,\bar{\Gamma}\big( A{\xi} \big),\\
 \bar{ \Sigma }_A(\xi) &= A^{ -1+\frac{n}{2-n}}\,\bar{\Sigma}\big( A{\xi} \big), 
\end{split}
\end{equation}
is a solution if $\big(\bar{V}(\xi)$, $\bar{\Gamma}(\xi), \bar{ \Sigma }(\xi)\big)$ is a solution. Note that conditions \eqref{eq:barcond1} persist under rescaling
but the initial value \eqref{eq:barcond2} does not. 
In other words, once we have a solution with the initial value $\bar\Gamma_0=1$, then we obtain solutions with different initial values by rescaling.

\subsection{An associated dynamical system}
\label{sec:assds}
In the previous section the emergence of a localizing structure was reduced to the existence of an orbit for the initial value problem
 \eqref{eq:barsysv} and \eqref{eq:barsyss} satisfying the initial conditions \eqref{eq:barcond2}-\eqref{eq:barcond1}. 
The system  \eqref{eq:barsysv}-\eqref{eq:barsyss} is non-autonomous and singular at $\xi=0$. 
In this section, we introduce a series of transformations in order to de-singularize the problem and 
obtain an autonomous system of ordinary differential equations. The solution associated with a localizing structure 
rises as a heteroclinic orbit of the new system.

Define $\big(\tilde{v}(\eta)$, $\tilde{ \gamma}(\eta)$, $\tilde{ \sigma}(\eta)$, $\tilde{ u}(\eta)\big)$ and a new independent variable $\eta$ by
\begin{equation} \label{eq:transform}
\begin{aligned}
  \tilde{v}\big(\log\xi\big) &=\xi^{ \frac{n}{2-n}} \,\bar{V}(\xi), &  
  \tilde{ \gamma}\big(\log\xi\big) &=\xi^{ \frac{2}{2-n}} \,\bar{ \Gamma}(\xi), \\
  \tilde{ \sigma}\big(\log\xi\big) &=\xi^{ -1+\frac{n}{2-n}} \,\bar{ \Sigma}(\xi), &
  \tilde{u}\big(\log\xi\big) & =\xi^{ \frac{2}{2-n}} \bar{U}(\xi), 
\end{aligned}
\end{equation}
where
 $\eta = \log\xi,  \ \eta \in [-\infty, +\infty] \, $
and $\bar{U}(\xi)$ is such that
$$U (x,\tau) = \phi_{\lambda}(\tau)^{ \frac{2}{2-n}}\bar{U}(\xi).$$ 
Then $\big(\tilde{v}(\eta)$, $\tilde{ \gamma}(\eta)$, $\tilde{ \sigma}(\eta)\big)$ satisfy the autonomous system 
\begin{align*}
  \tilde{\sigma}^\prime &=-\Big(1- \frac{n}{2-n}\Big) \tilde\sigma + \frac{ \lambda n}{2-n}\tilde{v} + \lambda(\tilde\sigma \tilde\gamma)^{ \frac{1}{n} },\nonumber\\
  \tilde{\gamma}^\prime &= \frac{1}{ \lambda } \big((\tilde\sigma \tilde\gamma)^{ \frac{1}{n} }- \tilde\gamma\big), \nonumber\\
  \tilde{v}^\prime &= (\tilde\sigma \tilde\gamma)^{ \frac{1}{n} } + \frac{n}{2-n} \tilde{v}, 
\end{align*}
in the new independent variable $\eta$, where we have denoted $\frac{d}{d\eta}(\cdot)$ by $(\cdot)^\prime$.

We observe that even if $ \tilde{ \gamma} \rightarrow \tilde{ \gamma }_\infty$ as $\eta \rightarrow \infty$ for some $\tilde{ \gamma }_\infty<\infty$, then $(\tilde\sigma \tilde\gamma)^{ \frac{1}{n} } \rightarrow \tilde{ \gamma }_\infty$ and thus $\tilde{v}$ and $ \tilde{ \sigma }$ diverge to infinity. 
This causes analytical difficulties, and in order to overcome this difficulty we introduce a second non-linear transformation 
to obtain an equivalent system with all equilibria lying in a finite region. We define new variables
\begin{align} \label{eq:defpqr}
 p = \frac{ \tilde\gamma}{ \tilde\sigma}, \qquad  q = n \frac{\tilde v}{ \tilde\sigma}, \qquad 
 r = \big( \tilde\sigma \tilde\gamma^{1-n}\big)^{\frac{1}{n}} ~~\Big(= \frac{\tilde u}{ \tilde\gamma }\Big).
\end{align}
A cumbersome but straightforward computation shows that $(p(\eta), q(\eta), r(\eta))$ satisfies the autonomous system
\begin{align}
  p^\prime &=p\Big( \frac{1}{\lambda}(r-1) + \big(1-\frac{n}{2-n}\big) -\lambda pr - \frac{\lambda}{2-n}q\Big), \label{eq:p}\\
  q^\prime &=q\Big(1-\lambda pr - \frac{\lambda}{2-n}q\Big) + npr, \label{eq:q}\\
  nr^\prime &=r\Big(\frac{1-n}{\lambda}(r-1) +\big(-1+\frac{n}{2-n}\big) + \lambda pr + \frac{\lambda}{2-n}q\Big). \label{eq:r}
\end{align}
In the following section we study the equilibria of system \eqref{eq:p}-\eqref{eq:r} and the local behavior around them,
with the objective of understanding the existence of heteroclinic orbits between the equilibria.


\subsection{Equilibria and orbits} \label{sec:equilibria}
System \eqref{eq:p}-\eqref{eq:r} has four equilibria, for $ \lambda \ne 1 + \frac{n}{2(1-n)}$, which are
\begin{equation}
\label{equilibria}
\begin{aligned}
 M_0 &= \begin{pmatrix}
        p_0 \\ q_0 \\ r_0
       \end{pmatrix}
     = \begin{pmatrix} 
        0 \\ 0 \\ 1+ \frac{2\lambda}{2-n}
       \end{pmatrix}, 
\qquad 
&& M_1 = \begin{pmatrix}
        p_1 \\ q_1 \\ r_1
       \end{pmatrix}
     = \begin{pmatrix} 
        0 \\ \frac{2-n}{ \lambda } \\ 1- \frac{ n\lambda}{(2-n)(1-n)}
       \end{pmatrix}, \\     
 M_2 &= \begin{pmatrix}
        p_2 \\ q_2 \\ r_2
       \end{pmatrix}
     = \begin{pmatrix} 
        0 \\ \frac{2-n}{ \lambda } \\ 0
       \end{pmatrix}, 
 \qquad 
 &&M_3 = \begin{pmatrix}
        p_3 \\ q_3 \\ r_3
       \end{pmatrix}
     = \begin{pmatrix} 
        0 \\ 0 \\ 0
       \end{pmatrix}.   
\end{aligned}
\end{equation}
When $ \lambda = 1 + \frac{n}{2(1-n)}$ the equilibria $M_0$, $M_1$, $M_2$ persist, but $M_3$ is replaced by equilibria
distributed on the entire $p$-axis. Since, we will be interested in the heteroclinic from $M_0$ to $M_1$ we do not consider this case separately.
Furthermore, we restrict attention to the placement that  $M_1$ lies above the plane $r=0$, which implies the restriction
\begin{equation}
 \lambda < \frac{(2-n)(1-n)}{n}. \label{eq:lambda_upperbdd}
\end{equation}
The placement of equilibria is depicted in Figure \ref{fig:equilibria0}.

The local behavior of the dynamical system around each equilibrium is determined (for hyperbolic equilibria) from the eigenstructure of
the linearized problem. We denote the three eigenvalues of the linearization at each $M_i$ by $\mu_{ij}$ and the associated eigenvectors 
by $\vec{X}_{ij}$, $j=1,2,3$.

$\bullet$ $M_0$ \emph{is as unstable node}: all eigenvalues are real and positive. The first and the last eigenvectors lie on the $(q,r)$-plane, 
the first one is pointing towards the origin on the $r$-axis and the last one is pointing towards right and down on the $(q,r)$-plane. 
The second eigenvector is going off the $(q,r)$-plane. (See Figure \ref{fig:equilibria0}.) 
\begin{align}
  \vec{X}_{01} &= \Bigg(0 , 1 , - \frac{\lambda}{1-n} \frac{\lambda}{2-n} \bigg(\frac{1}{1- \frac{n}{1-n} \frac{\lambda}{r_0}} \bigg)\Bigg), & & \mu_{01}=1,
  \nonumber
  \\
  \vec{X}_{02} &= \Bigg( \frac{1}{r_0} , n , - \frac{\lambda}{1-n} \frac{\lambda}{2-n} \bigg(\frac{1}{ \frac{1}{2}- \frac{n}{1-n} \frac{\lambda}{r_0}} \bigg)\Bigg), & & \mu_{02}=2,
  \label{equivector}
  \\
  \vec{X}_{03} &= (0 , 0 , 1), & & \mu_{03}= \frac{1}{n} \frac{1-n}{ \lambda } r_0.
  \nonumber
\end{align}

$\bullet$ $M_1$ \emph{is a saddle }: all eigenvalues are real, two are negative and one is positive. The first and the last eigenvector lie on the $(q,r)$-plane, 
the first one is an unstable direction, pointing toward the $M_2$ and the last one is a stable direction, pointing toward $M_1$ in right bottom direction. The second eigenvector is pointing off the plane. 
\begin{align*} 
  \vec{X}_{11} &= \Bigg( 0, 1,   - \frac{\lambda}{1-n} \frac{\lambda}{2-n} \bigg(\frac{1}{1+ \frac{n}{1-n} \frac{ \lambda}{r_1}} \bigg)\Bigg), \quad\mu_{11}=-1,\\
  \vec{X}_{12} &= \Bigg( -\frac{1}{r_1}\Big(2- \frac{1}{1-n}\Big) \Big(\frac{2-n}{ \lambda }\Big),2(1-n),  - \left( \frac{n}{1-n}\right)\frac{\lambda}{1-n} \frac{\lambda}{2-n} \bigg(\frac{1}{1+ \big(\frac{n}{1-n}\big)^2 \frac{\lambda}{r_1}} \bigg)\Bigg),\\
  &\quad\quad\quad \mu_{12}=- \frac{n}{1-n},\\
  \vec{X}_{13} &= (0,  0, 1), \quad \mu_{13}= \frac{1-n}{ \lambda n } r_1.
\end{align*}

$\bullet$ $M_2$ \emph{is a stable node}: all eigenvalues are real and negative. The eigenvectors are the coordinate basis vectors,
\begin{align*}
  \vec{X}_{21} &= (1,0,0), \quad \mu_{21}=-\frac{1}{ \lambda }\left(1+ \frac{n}{ 2-n } \lambda\right),\\
  \vec{X}_{22} &= (0,1,0), \quad \mu_{22}=-1,\\
  \vec{X}_{23} &= (0,0,1), \quad \mu_{23}=-\frac{1}{n} \frac{1-n}{ \lambda } r_1.
\end{align*}

$\bullet$ $M_3$ \emph{is at the origin and is a saddle}: all eigenvalues are real, the second eigenvalue is positive and the last eigenvalue is negative. The sign of the first eigenvalue bifurcates by $ \lambda =  1+ \frac{n}{2(1-n)}$; it is negative if $ \lambda < 1+ \frac{n}{2(1-n)}$ and is positive if $ \lambda > 1+ \frac{n}{2(1-n)}$. The eigenvectors are the coordinate basis vectors,
\begin{align*}
  \vec{X}_{31} &= (1,0,0), \quad \mu_{31}=- \frac{1}{ \lambda }\left(1- \frac{2(1-n)}{2-n} \lambda\right),\\
  \vec{X}_{32} &= (0,1,0), \quad \mu_{32}=1,\\
  \vec{X}_{33} &= (0,0,1), \quad \mu_{33}=-\frac{1}{n} \frac{1-n}{ \lambda } r_0.
\end{align*}

\subsection{Characterization of the heteroclinic orbit} \label{sec:hetero}

In this section, we study the heteroclinic connections between the equilibrium points. Since there are four equilibria, the unstable and stable manifolds of the equilibria 
might intersect in various ways and could conceivably produce multiple connections. We aim to identify the one that is physically relevant and to construct it in phase space. 
We assert that the relevant orbit  joins  $M_0$ to $M_1$ as $\eta$ runs from $-\infty$ to $\infty$. 
 A schematic sketch of this orbit is depicted in Figure \ref{fig:hetero}. Let us explain how that is  singled out.
\begin{figure}
  \centering
  
  \subfigure[Equilibria of $(p,q,r)$-system and linearized vector fields around]{
  \includegraphics[width=7cm]{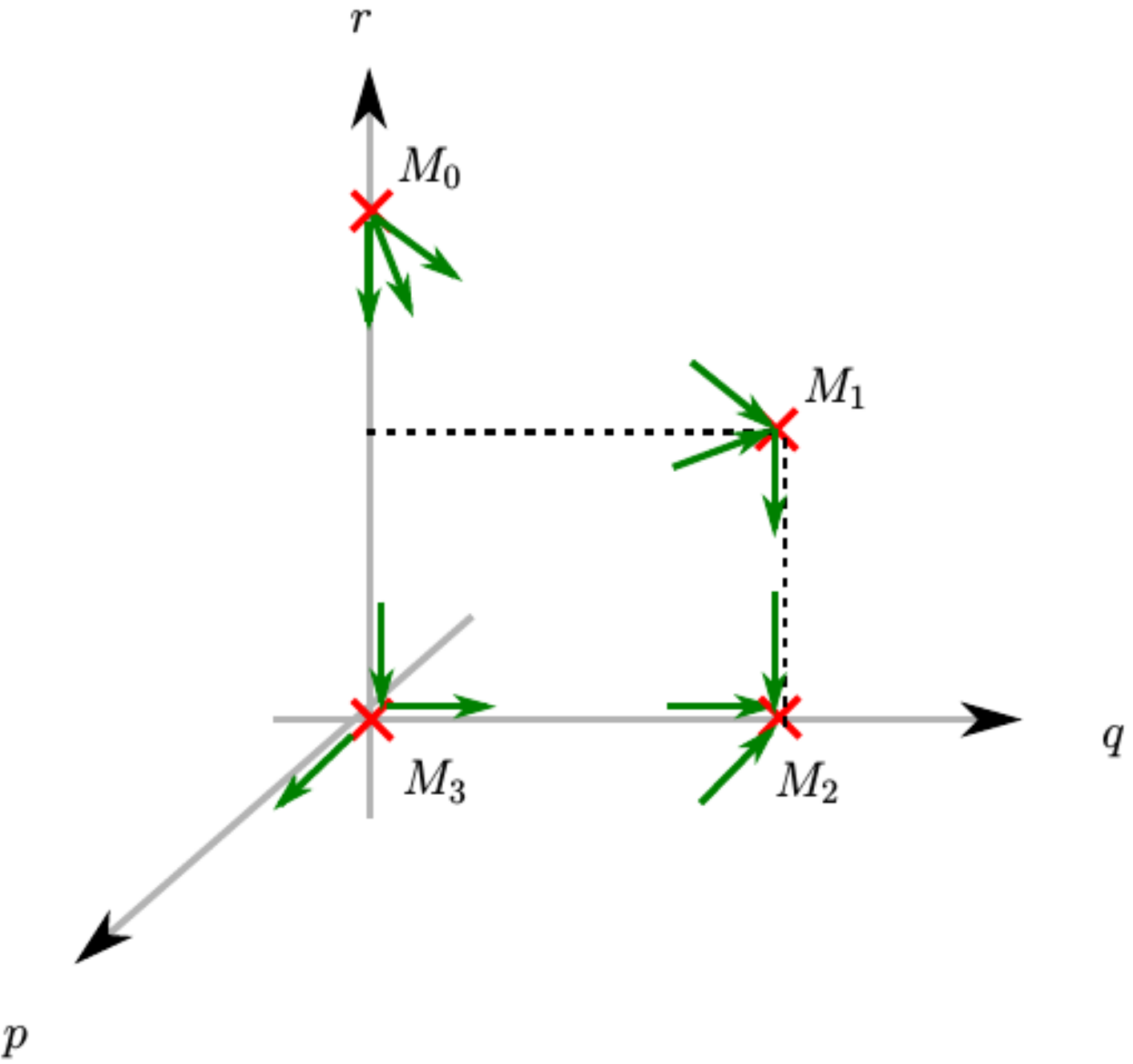} \label{fig:equilibria0}
   } 
  \subfigure[Schematically illustrated target orbit between $M_0$ and $M_1$] {
   \includegraphics[width=7cm]{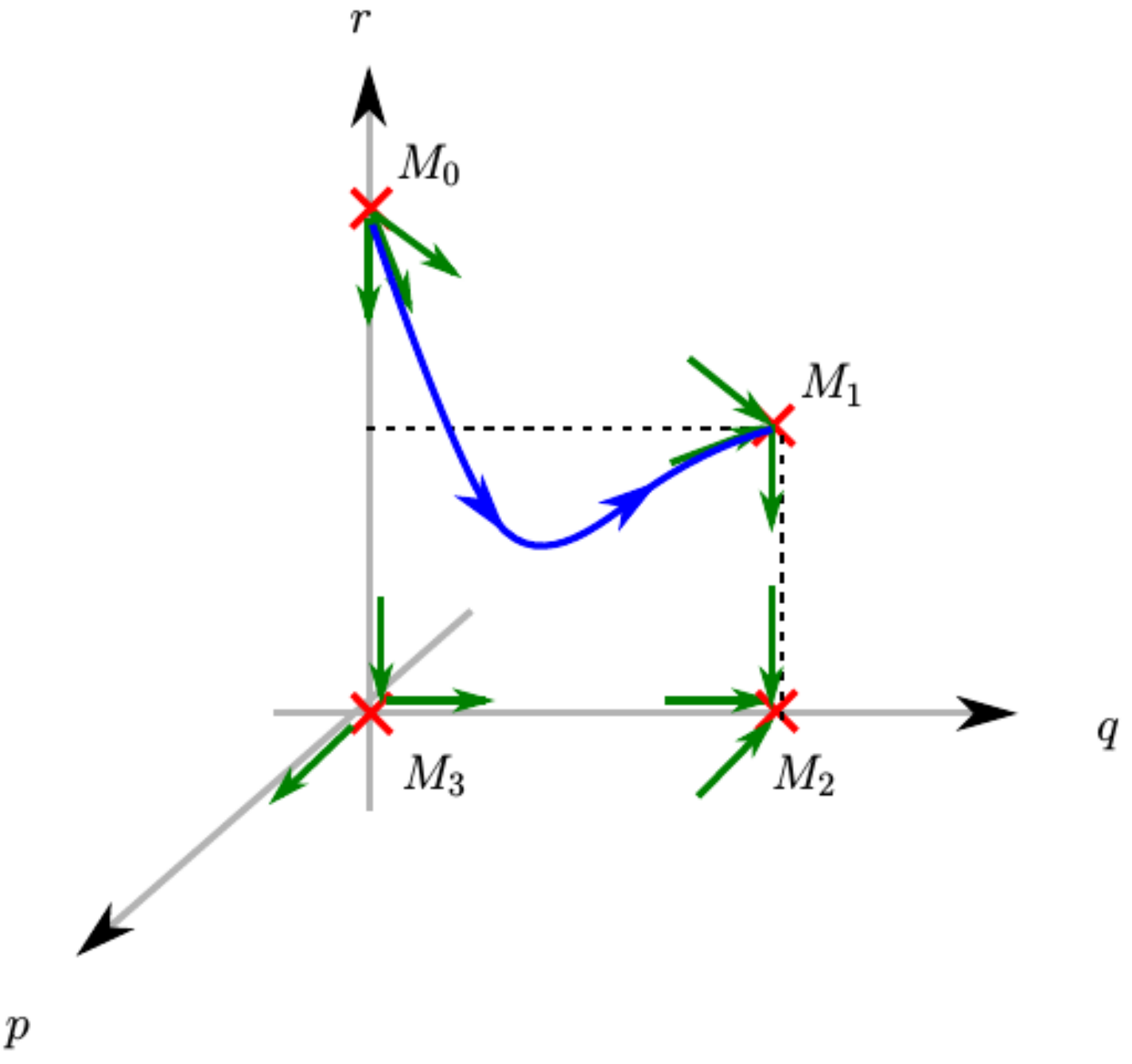} \label{fig:hetero}
  }
  \caption{Phase diagram. Features attributed to generic parameters are  $(i)$ $M_1$ lies above the plane $r=0$; $(ii)$ $\mu_{31}>0$. }
\end{figure}

\subsubsection{The behavior as $\eta \rightarrow \infty$} \label{sec:char_infty}

We will be interested in the orbit converging to the equilibrium $M_1$. The reason is the following: From the perspective of shear band formation, the strain of a 
localizing solution should grow with time, as the material is loading. Note that if $\gamma(x,t)$ grows polynomially as $t^\rho$, then the ratio
$ \frac{(t  + \gamma_0)\gamma_t}{\gamma} \sim \rho$ as $t$ increases. On the other hand, the transformations we imposed imply
\begin{align*}
 r=\frac{\tu}{\tg} = \frac{ \xi^{ \frac{2}{2-n} } \, \bU(\xi) }{ \xi^{ \frac{2}{2-n} } \, \bG(\xi) } = \frac{\bU(\xi)}{\bG(\xi)} = \frac{U(x,\tau)}{\Gamma(x,\tau)}
 =\frac{(t+ \gamma_0)\bar{u}(x,t)}{\bar\gamma(x,t)} = \frac{(t  + \gamma_0)\bar\gamma_t}{\bar\gamma}.
\end{align*}
Thus we are interested in  $r \to \rho$ with $\rho > 0$ as $\eta \to \infty$, which suggests to restrict attention to orbits converging to $M_1$.

The behavior of the nonlinear problem near the saddle point $M_1$ is determined by the corresponding linearization and the 
orbit $\varphi(\eta)$ in a neighbourhood of $M_1$ is expressed as
 \begin{equation}
   \varphi(\eta)  - M_1
  = \kappa^\prime_1 e^{- \eta}\vec{X}_{11} + \kappa^\prime_2 e^{- \frac{n}{1-n}\eta}\vec{X}_{12} + \textrm{higher-order terms} \quad \text{as $\eta \rightarrow \infty$} \label{eq:local1} \, .
\end{equation}
We are interested in orbits that have a nontrivial component out of the plane $p = 0$. This is achieved by requiring the coefficient $\kappa_2' \ne 0$ in \eqref{eq:local1}. 
Note that when $\frac{n}{1-n} <1$ the term $\kappa^\prime_2 e^{- \frac{n}{1-n}\eta}\vec{X}_{12}$ determines the asymptotic response when $\kappa_2' \ne 0$.

The reader should note that the plane $p=0$ is invariant for the flow of the dynamical system \eqref{eq:p} - \eqref{eq:r}. In fact the flow can be explicitly
computed in this case and provides the heteroclinic connecting $M_0$ to $M_1$ on the plane $p =0$. As this orbit is known we focus on the ones that venture
out of the plane $p=0$, hence imposing $\kappa_2' \ne 0$.  Figure \ref{fig:hetero} illustrates schematically   the orbit approaching $M_1$ in $\vec{X}_{12}$.

\subsubsection{The behavior as $\eta \rightarrow -\infty$}
The orbit emanating from $M_0$  (as $\eta \to -\infty$) has the asymptotic expansion
\begin{equation} \label{eq:asymp_alpha}
   e^{-2\eta}\left[\begin{pmatrix}
   p(\eta) \\ q(\eta) \\ r(\eta)
  \end{pmatrix}
  - M_0
  \right] \rightarrow \kappa\vec{X}_{02}, \quad \text{as $\eta \rightarrow -\infty$ for some constant $\kappa>0$.}
\end{equation}
This claim  follows from the initial conditions \eqref{eq:barcond2} and \eqref{eq:barcond1}, via an asymptotic analysis  of the behavior of  the solution $(\bar{V}(\xi), ~\bar{\Gamma}(\xi),~ \bar{\Sigma}(\xi))$ of 
\eqref{eq:barsysv}-\eqref{eq:barsyss} near the singular point $\xi = 0$, which is presented in the Appendix.
Figure \ref{fig:hetero} depicts the orbit exhibiting the above asymptotics; the orbit emerges in the direction of $\vec{X}_{02}$ of the second unstable eigenspace.

\subsubsection{Singling out the heteroclinic orbit}
\label{sec:singling}
The numerical calculations performed in Section \ref{sec:numerical_const}  (see Fig \ref{fig:2manifold}) suggest there is a
two-parameter family of heteroclinic orbits connecting $M_0$ and $M_1$. This conjecture is also supported by the following arguments:
$M_1$ has a two dimensional stable manifold while  $M_0$ is an unstable node (with a three-dimensional unstable manifold).
One heteroclinic connection between $M_0$ and $M_1$ lies in the plane $p =0$ and can be  explicitly computed. Its computation results
 from noticing that the plane $p=0$ is invariant under the flow of the dynamical system \eqref{eq:p}-\eqref{eq:r} and integrating the resulting system of (two) differential 
 equations. This provides one connection on the plane $p=0$. Since, the numerical computations suggest there is
 an out-of-plane heteroclinic connection this indicates that there is a two parameter family of such connections. Indeed, they are systematically
 computed in Section \ref{sec:numerical_const}.

Given the two parameter family, we proceed to show how to select the desired heteroclinic that satisfies \eqref{eq:barcond2} and \eqref{eq:barcond1}.
The objective is to select the heteroclinic connection that satisfies the asymptotic relation  \eqref{eq:asymp_alpha}. 
Note that using \eqref{idcompat1} and \eqref{idcompat2} we can  compute from the data  the coefficient  $\kappa = \bG_0\bU_0^{1-n}$ in \eqref{eq:asymp_alpha}.

The general heteroclinic orbit $\varphi(\eta)$ satisfies,  in a sufficiently small neighbourhood of $M_0$, the expansion
\begin{equation}
   \varphi(\eta)
  - M_0
  = \kappa_1 e^{\eta}\vec{X}_{01} + \kappa_2 e^{2\eta}\vec{X}_{02} + \kappa_3 e^{\frac{1}{n} \frac{1-n}{ \lambda } r_0\eta}\vec{X}_{03}+ \text{higher-order terms}, \label{eq:local}
\end{equation}
as $\eta \rightarrow -\infty$. For $n$ small, $1<2<\frac{1}{n} \frac{1-n}{ \lambda } r_0$, the first term in the right hand side  of \eqref{eq:local} dominates the remaining terms. 
Therefore,  \eqref{eq:asymp_alpha} dictates  that  the coefficient $\kappa_1=0$ and thus
\eqref{eq:asymp_alpha} fixes one curve and reduces  by one the degrees of freedom.
Let us denote by 
$\Phi(\eta) := \left( P(\eta), \ Q(\eta), \ R(\eta) \right)$
a heteroclinic orbit that emanates  in
the direction of the eigenvector $\vec{X}_{02}$. There is one degree of freedom at our disposal
provided by the translation invariance of the selected orbit.

Having selected the curve $\Phi (\eta)$, we proceed to select the translation factor $\eta_0$ as follows:
Let $\kappa_2$ be the  constant associated to the asymptotic behavior of $\Phi(\eta)$ in  \eqref{eq:local}. We set our target heteroclinic
$$
\varphi^\star (\eta) = \begin{pmatrix}
   p(\eta) \\ q(\eta) \\ r(\eta)
  \end{pmatrix}=\begin{pmatrix}
   P(\eta+\eta_0) \\ Q(\eta+\eta_0) \\ R(\eta+\eta_0)
  \end{pmatrix},
$$
and compute

\begin{align*}
&e^{-2\eta}\left[\begin{pmatrix}
   p(\eta) \\ q(\eta) \\ r(\eta)
  \end{pmatrix}
  - M_0\right] = e^{-2\eta}\left[\begin{pmatrix}
   P(\eta+\eta_0) \\ Q(\eta+\eta_0) \\ R(\eta+\eta_0)
  \end{pmatrix}
  - M_0\right]\rightarrow {\kappa_2}e^{2\eta_0}\vec{X}_{02}, \quad \text{as $\eta \rightarrow -\infty$.}
\end{align*}
Therefore, given $\kappa$ in \eqref{eq:asymp_alpha}, we compute $\eta_0$ by 
\begin{equation}
 \eta_0 = \frac{1}{2}\log\left(\frac{\kappa_2}{\kappa}\right). \label{eq:eta0}
\end{equation}
In summary,  there exists a two parameter family of heteroclinic orbits joining $M_0$ and $M_1$,  whose existence is supported by the numerical calculations, 
and \eqref{eq:asymp_alpha} singles out one orbit among them.

\subsection{A three-parameter family of focusing self-similar solutions} \label{sec:localiz}
Among the parameters  $n$,  $\lambda$ and the data $\bar \Gamma_0$, $\bar U_0$ and $\bar \Sigma_0$,  only
 three are independent due to \eqref{idcompat1} and \eqref{idcompat2}.
Given $n$ and two parameters,  say $ \bG_0=\bG(0)$ and  $\bU_0=\bU(0)$ with $\bar \Gamma_0 < \bar U_0$,  
we select  $ \lambda > 0$ and the translaton factor $\eta_0$ 
by \eqref{eq:kappa}:
\begin{equation}
 \lambda = \frac{2-n}{2}\left(\frac{\bU_0}{\bG_0} -1 \right), \qquad \kappa = \bG_0\bU_0^{1-n}. \label{eq:lambda_kappa}
\end{equation}
{  
The inequality \eqref{eq:lambda_upperbdd} restricts the data
\begin{equation}
 1 \;<\; \frac{\bU_0}{\bG_0} \;<\; \frac{2-n}{n}.
\end{equation}
Now,  we pick one heteroclinic $\varphi^\star (\eta) = \big ( p(\eta) , q (\eta) , r (\eta) \big )^T $  having the desired asymptotic behavior 
by the procedure described in Section \ref{sec:singling}. 
}
The solution is reconstructed by  inverting \eqref{eq:defpqr} to obtain  $\tv$, $\tg$, $\tu$ and $\ts$,
\begin{equation}
\label{invrel}
  \tv = \frac{1}{n}\Big( p^{-(1-n)}q^{2-n}r^n \Big)^{ \frac{1}{2-n}},\quad
  \tg = \Big( pr^n \Big)^{ \frac{1}{2-n}},\quad
  \ts = \Big( p^{-(1-n)}r^n \Big)^{ \frac{1}{2-n}},\quad
  \tu = \Big( pr^2 \Big)^{ \frac{1}{2-n}}.
\end{equation}
It is instructive to relate $(p,\ q,\ r)$ to the original variables $\big(v(x,t)$, $\gamma(x,t)$, $\sigma(x,t)$, $u(x,t)\big)$. 
Using \eqref{eq:transform} we recover $\big(\bV(\xi),\ \bG(\xi),\ \bU(\xi),\ \bS(\xi)\big)$ which are smooth functions of $\xi$.
Moreover, using \eqref{eq:selfsimilar} and  \eqref{rescale},  the original variables
$v$, $\gamma$, $\sigma$ and $u = v_x$ are reconstructed by the formulas
\begin{equation}
 \begin{split}
 v(x,t) &= \Big(1 + \frac{t}{ \gamma_0 } \Big)^{ \frac{ \lambda n}{2-n} } \;\bV\bigg(x\Big(1 + \frac{t}{ \gamma_0 } \Big)^{\lambda}\bigg),\\
 \gamma(x,t) &= \gamma_0\Big(1 + \frac{t}{ \gamma_0 } \Big)^{1+ \frac{ 2 \lambda}{2-n} } \;\bG\bigg(x\Big(1 + \frac{t}{ \gamma_0 } \Big)^{\lambda}\bigg), \\
 \sigma(x,t) &= \frac{1}{\gamma_0}\Big(1 + \frac{t}{ \gamma_0 } \Big)^{ -1- \lambda(1-\frac{n}{2-n}) } \;\bS\bigg(x\Big(1 + \frac{t}{ \gamma_0 } \Big)^{\lambda}\bigg), \\
 u(x,t) &= \Big(1 + \frac{t}{ \gamma_0 }\Big)^{\frac{2 \lambda}{2-n} } \;\bU\bigg(x\Big(1 + \frac{t}{ \gamma_0 } \Big)^{\lambda}\bigg). \\
 \end{split} \label{eq:overall_transform}
\end{equation}
The focusing behavior of the profiles is readily deduced from the above formulas and is also illustrated in the
Figures \ref{fig:gamma}-\ref{fig:sigma}, obtained from the computed heteroclinic orbit.
The asymptotic behavior of the profiles  $\big( \bar{ V }, \bar{ \Gamma }, \bar{ \Sigma},\bar{ U }  \big)$ 
as $\xi \to 0 $ and $\xi \to \infty$ captures the behavior of the localizing solution
and is listed  in the following proposition:
\begin{proposition} \label{prop:asymptotic}
For the  orbit $\varphi^\star(\eta)$,  the corresponding profile  $\big( \bar{ V }, \bar{ \Gamma }, \bar{ \Sigma},\bar{ U }  \big)$ 
defined in  $[0,\infty)$ by  \eqref{eq:transform} satisfies the properties:
 \begin{enumerate}[(i)]
  \item At $\xi = 0$ it satisfies the boundary conditions
    \begin{equation*}
    \bar{V}(0) = \bar\Gamma_\xi(0) = \bar\Sigma_\xi(0) = \bar{U}_\xi(0)=0, \quad \bar{ \Gamma}(0) = \bar{ \Gamma}_0, \quad \bar{ U}(0) = \bar{ U}_0. \quad 
  \end{equation*}
  \item The asymptotic behavior as $\xi \rightarrow 0$ is given by 
  \begin{align*}
    \bV(\xi) &= \bU_0\xi + O(\xi^3), & \bG(\xi) &= \bG_0 + O(\xi^2), \\
    \bS(\xi) &= \frac{\bU_0^n}{\bG_0}+ O(\xi^2), & \bU(\xi) &= \bU_0 + O(\xi^2).
  \end{align*}
  \item The asymptotic behavior as $\xi \rightarrow \infty$ is given by
  \begin{align*}
    \bV(\xi) &= O(1), &    \bG(\xi) &= O(\xi^{- \frac{1}{1-n}}),\\
	\bS(\xi) &= O(\xi), &   \bU(\xi) &= O(\xi^{- \frac{1}{1-n}}).
  \end{align*}
 \end{enumerate}
\end{proposition}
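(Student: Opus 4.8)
The plan is to transfer the known asymptotics of the heteroclinic orbit $\varphi^\star(\eta)=\big(p(\eta),q(\eta),r(\eta)\big)$ at its two endpoints — $M_0$ as $\eta\to-\infty$ and $M_1$ as $\eta\to+\infty$ — to the profile $\big(\bV,\bG,\bS,\bU\big)$ through the explicit algebraic change of variables that links them. Writing $\xi=e^{\eta}$ and composing \eqref{eq:transform} with the inversion formulas \eqref{invrel} one obtains
\[
\bV(\xi)=e^{-\frac{n}{2-n}\eta}\,\tv(\eta),\qquad \bG(\xi)=e^{-\frac{2}{2-n}\eta}\,\tg(\eta),\qquad \bS(\xi)=e^{\frac{2(1-n)}{2-n}\eta}\,\ts(\eta),\qquad \bU(\xi)=e^{-\frac{2}{2-n}\eta}\,\tu(\eta),
\]
where $\tv,\tg,\ts,\tu$ are the monomials in $(p,q,r)$ displayed in \eqref{invrel}. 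Reversing the transformations, $\varphi^\star$ produces a smooth solution of \eqref{eq:barsysv}--\eqref{eq:barsyss} on $(0,\infty)$; moreover, since the plane $p=0$ is invariant for \eqref{eq:p}--\eqref{eq:r} and $\varphi^\star$ leaves it, $p(\eta)>0$ for every finite $\eta$, so all the fractional powers above are legitimate. Consequently each profile component is an explicit monomial in $p,q,r$ times a power of $\xi$, and the proposition reduces to (a) recording the leading asymptotics of $(p,q,r)$ at $M_0$ and $M_1$, and (b) a bookkeeping of exponents.

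\emph{Behaviour as $\xi\to0$ (parts (i)--(ii)).} I would use the defining asymptotics \eqref{eq:asymp_alpha} of $\varphi^\star$, which, reading off the components of $\vec X_{02}$ in \eqref{equivector}, say that $p(\eta)=\tfrac{\kappa}{r_0}e^{2\eta}+o(e^{2\eta})$, $q(\eta)=n\kappa\,e^{2\eta}+o(e^{2\eta})$, $r(\eta)=r_0+O(e^{2\eta})$, with $r_0=1+\tfrac{2\lambda}{2-n}$ and $\kappa=\bG_0\bU_0^{1-n}$. Substituting into \eqref{invrel} and then into the four formulas above, in every component the power of $e^{\eta}$ cancels against the prefactor at leading order, leaving
\[
\bG(0)=(\kappa r_0^{\,n-1})^{1/(2-n)},\qquad \bU(0)=(\kappa r_0)^{1/(2-n)},\qquad \bS(0)=(\kappa^{-(1-n)}r_0)^{1/(2-n)},\qquad \bV(\xi)\sim(\kappa r_0)^{1/(2-n)}\,\xi .
\]
Inserting the compatibility relation \eqref{idcompat2} in the form $r_0=\bU_0/\bG_0$ together with $\kappa=\bG_0\bU_0^{1-n}$, a short computation reduces these constants to $\bG_0$, $\bU_0$, $\bU_0^{n}/\bG_0$ and $\bU_0\xi$ respectively. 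The first correction carried by \eqref{eq:asymp_alpha} is of relative size $O(e^{2\eta})=O(\xi^2)$, which yields the $O(\xi^2)$ error bounds for $\bG,\bS,\bU$ and, after multiplication by the $\xi$ prefactor, the $O(\xi^3)$ bound for $\bV$ (consistent with $\bV$ odd by \eqref{eq:barcond1}). This proves (ii), and (i) follows at once: $\bV(0)=0$, $\bG(0)=\bG_0$, $\bU(0)=\bU_0$, and since the $\xi$--expansions begin with a quadratic correction, $\bG_\xi(0)=\bS_\xi(0)=\bU_\xi(0)=0$.

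\emph{Behaviour as $\xi\to\infty$ (part (iii)).} Here I would use the behaviour of $\varphi^\star$ near the saddle $M_1$: $q(\eta)\to q_1=\tfrac{2-n}{\lambda}\ne0$ and $r(\eta)\to r_1=1-\tfrac{n\lambda}{(2-n)(1-n)}>0$ (positive by \eqref{eq:lambda_upperbdd}), while $p(\eta)>0$ tends to $0$. The decay rate of $p$ is pinned down by the scalar equation \eqref{eq:p}, $p'=p\,g(p,q,r)$ with $g(M_1)=-\tfrac{n}{1-n}$: integrating $(\log p)'=g$ and using that $g-g(M_1)$ decays exponentially, one gets $p(\eta)\sim C\,e^{-\frac{n}{1-n}\eta}$ with $C>0$, independently of the finer stable eigenstructure. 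Feeding this into \eqref{invrel} and the four formulas, the factor $p^{1/(2-n)}\sim\mathrm{const}\cdot e^{-\frac{n}{(1-n)(2-n)}\eta}$ combines with $e^{-\frac{2}{2-n}\eta}$ via the identity $\tfrac{2}{2-n}+\tfrac{1}{2-n}\cdot\tfrac{n}{1-n}=\tfrac{1}{1-n}$ to give $\bG(\xi),\bU(\xi)=O(\xi^{-1/(1-n)})$; the factor $p^{-(1-n)/(2-n)}\sim\mathrm{const}\cdot e^{\frac{n}{2-n}\eta}$ combines with $e^{\frac{2(1-n)}{2-n}\eta}$ to give $\bS(\xi)=O(\xi)$, and with $e^{-\frac{n}{2-n}\eta}$ to give $\bV(\xi)=O(1)$ (the bounded nonzero limits of $q$ and $r$ keeping $\tv$ bounded). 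This is (iii).

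\emph{Main obstacle.} The only real analytic content is justifying that the remainders hidden in \eqref{eq:asymp_alpha}, \eqref{eq:local} and \eqref{eq:local1} are genuinely subdominant. Near $M_0$ this involves the eigenvalue resonance $\mu_{02}=2\mu_{01}$, which would a priori generate a secular term $\eta\,e^{2\eta}$ along $\vec X_{02}$; it is absent precisely because the singling--out procedure of Section \ref{sec:singling} selects the orbit with vanishing $\vec X_{01}$--component ($\kappa_1=0$ in \eqref{eq:local}), and the remaining modes then decay strictly faster than $e^{2\eta}$ thanks to $\mu_{03}>2$ (part of the generic/small-$n$ regime in which $\varphi^\star$ is constructed). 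Near $M_1$ one leans on the invariant--plane structure $p'=p\,g$ to lock the decay rate of $p$. Once these local normal--form estimates are in place — they are also the substance of the $\xi\to0$ expansion carried out in the Appendix — the exponent bookkeeping above is rigorous and the proposition follows.
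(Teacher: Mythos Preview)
Your proposal is correct and follows essentially the same route as the paper: transport the asymptotics of $\varphi^\star$ at $M_0$ and $M_1$ through the algebraic maps \eqref{invrel} and \eqref{eq:transform}, then do the exponent bookkeeping. For (i)--(ii) the paper leans on the Appendix, which runs the computation in the opposite direction (profile Taylor expansions $\Rightarrow$ orbit asymptotics \eqref{eq:asymp_alpha}), whereas you invert this and read the profile directly off \eqref{eq:asymp_alpha}; for (iii) the paper reads $p(\eta)\sim e^{-\frac{n}{1-n}\eta}$ from the linearization \eqref{eq:local1}, while your argument via $p'=p\,g$ with $g(M_1)=-\tfrac{n}{1-n}$ gives the same rate without invoking the ordering $\tfrac{n}{1-n}<1$ of the stable eigenvalues---a mild but genuine simplification.
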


{  
Indeed the behavior in (i) and (ii) follows readily from the analysis in Section \eqref{sec:hetero} and the Appendix.  To prove (iii):  Note that the heteroclinic orbit 
$\varphi^\star (\eta)$ must lie in the intersection $W^u (M_0) \cap W^s(M_1)$ and thus satisfies as $\eta \to \infty$ the asymptotic expansion \eqref{eq:local1}.
The term multiplying $e^{-\eta}$ decays much faster than the term $e^{-\frac{n}{1-n} \eta}$ and can be ignored. Using \eqref{eq:local1}, together with 
\eqref{equilibria}, \eqref{equivector}, we compute the asymptotic behavior 
$$
p(\eta) \sim e^{- \frac{n}{ 1-n } \eta } \, ,  \quad q(\eta) \to \frac{2-n}{\lambda} \, , \quad r(\eta) \to 1 - \frac{n \lambda}{(2-n)(1-n)} \quad \mbox{ as $\eta \to \infty$} \, .
$$
In turn, via \eqref{invrel},
$$
\tg (\eta) \sim e^{- \frac{n}{ (1-n)(2-n) } \eta } \, , \quad \tu(\eta) \sim e^{- \frac{n}{ (1-n)(2-n) } \eta } \, . \quad \ts(\eta) \sim e^{\frac{n}{ 2-n } \eta }
\quad \mbox{ as $\eta \to \infty$}
$$
and (iii) follows from \eqref{eq:transform} and $\xi = e^\eta$.

Having identified the asymptotic behavior of $\big(  \bar{ \Gamma }, \bar{ \Sigma},\bar{ U }  \big)$ associated to the selected heteroclinic orbit, we can identify the 
 behavior of $\big(\gamma,\sigma,u\big)$.  Using \eqref{eq:overall_transform} and Proposition \ref{prop:asymptotic}(iii),  we obtain:

\medskip
\noindent
(a)  For the strain:
\begin{align*}
 \gamma(0,t) &= \bar \Gamma_0 \left ( 1 + \tfrac{t}{\gamma_0} \right )^{ 1+ \frac{ 2 }{2-n}\lambda } \\
 \gamma(x,t) &\sim \left ( 1 + \tfrac{t}{\gamma_0} \right )^{ 1- \frac{ n }{(2-n)(1-n)}\lambda } |x|^{-\frac{1}{1-n}} \quad \text{as $t \rightarrow \infty$ for $x\ne 0$}.
\end{align*}
The tip of the strain $\gamma(0,t)$ grows at superlinear order and $\gamma(x,t)$, $x\ne 0$, grows in sublinear order as $t \rightarrow \infty$. Recalling that the growth of the uniform shear motion is linear, this observation points to  localization.

\medskip
\noindent
(b) For the strain rate
\begin{align*}
 u(0,t) &=   \bar U_0 \left ( 1 + \tfrac{t}{\gamma_0} \right )^{ \frac{ 2 }{2-n}\lambda } \\
 u(x,t) &\sim \left ( 1 + \tfrac{t}{\gamma_0} \right )^{- \frac{ n }{(2-n)(1-n)}\lambda } |x|^{-\frac{1}{1-n}} \quad \text{as $t \rightarrow \infty$ for $x\ne 0$}.
\end{align*}
 $u(x,t)$ decays to $0$  for $x\ne0$  in the presence of $n>0$ as $t \rightarrow \infty$. 

%

\medskip
\noindent
(c) For the stress
\begin{align*}
 \sigma(0,t) &= \frac{\bU_0^n}{\bG_0}  \frac{1}{\gamma_0}  \left ( 1 + \tfrac{t}{\gamma_0} \right )^{ -1-  2 \lambda \frac{1 - n}{2-n}}   ,\\
 \sigma(x,t) &\sim \left ( 1 + \tfrac{t}{\gamma_0} \right )^{ -1 + \frac{ n }{2-n}\lambda } |x| \qquad \text{as $t \rightarrow \infty$ for $x\ne 0$}.
\end{align*}
As time proceeds, the stress collapses at the origin quicker to $0$ than at points $x \ne 0$.
}


\section{Numerical Results} \label{sec:numerical_const}
In this section we construct numerically the heteroclinic orbit  proposed in Section  \ref{sec:hetero} and schematically presented in Figure  \ref{fig:hetero}. 
To this end we numerically compute solutions of the dynamical system  \eqref{eq:p}-\eqref{eq:r} 
using appropriate routines from the odesuite of MATLAB.  Care has to be exercised to deal with the known difficulties in numerically computing
heteroclinic orbits. 

The first task is to compute the collection of heteroclinic orbits joining $M_0$ and $M_1$. Such heteroclinics appear as intersections 
of  $W^u(M_0)$, the unstable manifold  of the equilibrium $M_0$, and $W^s(M_1)$, the
stable manifold of the equilibrium $M_1$.  We capture this intersection numerically by using a shooting argument. We start with initial conditions 
near $M_1$ at the directions spanned by the stable eigenvectors of the equilibrium  $M_1$. 
Such data lie  in the stable manifold $W^s(M_1)$; we  compute numerically the solution of \eqref{eq:p}-\eqref{eq:r} solving backwards in $\eta$.
The orbits that tend near the equilibrium $M_0$ are retained and form a two dimensional manifold of heteroclinics, which comprises
the intersection ${W}^u(M_0) \cap W^s(M_1)$. The result of the computation is depicted in Fig. \ref{fig:2manifold}. 

 \begin{figure}[ht]
  \centering
  \subfigure[ Numerical plot of the surface formed by heteroclinic orbits (blue curves) comprising $W^s(M_1) \cap W^u (M_0)$.]{
  \includegraphics[scale=0.6]{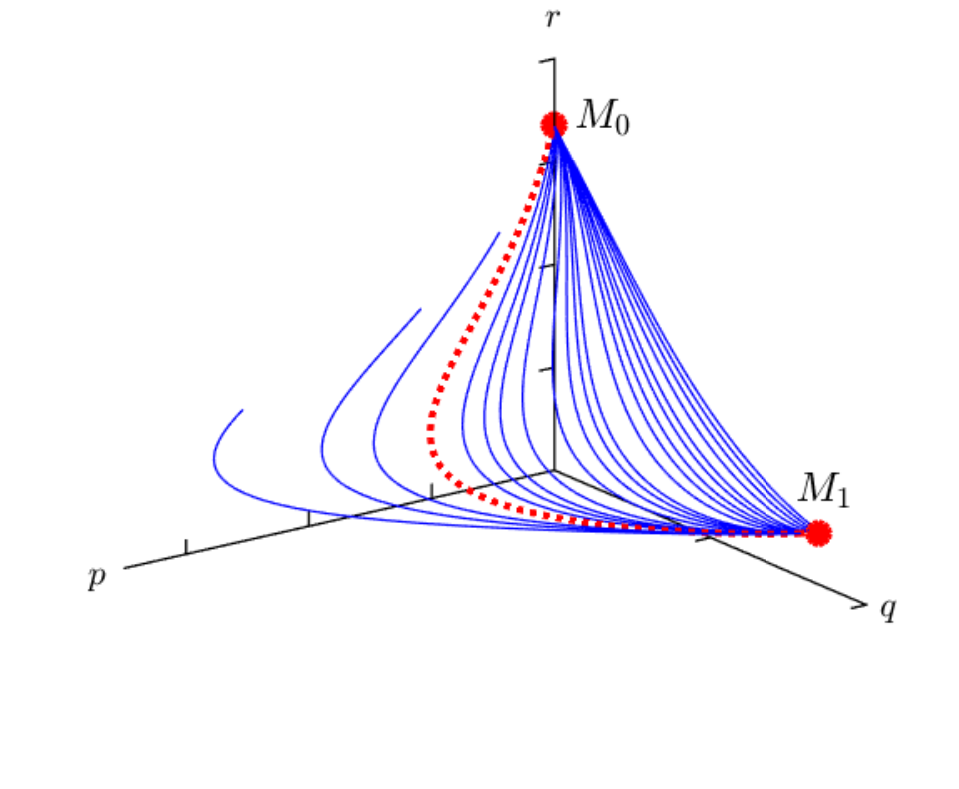} \label{fig:2manifold}
  }
   \subfigure[ A zoom around $M_0$ depicting the intersection of the surface of heteroclinics (blue curves)
with the part of the unstable manifold $W^u (M_0)$ associated to the directions $\vec{X}_{02}$
and $\vec{X}_{03}$ (orange curves). ]{
  \includegraphics[scale=0.6]{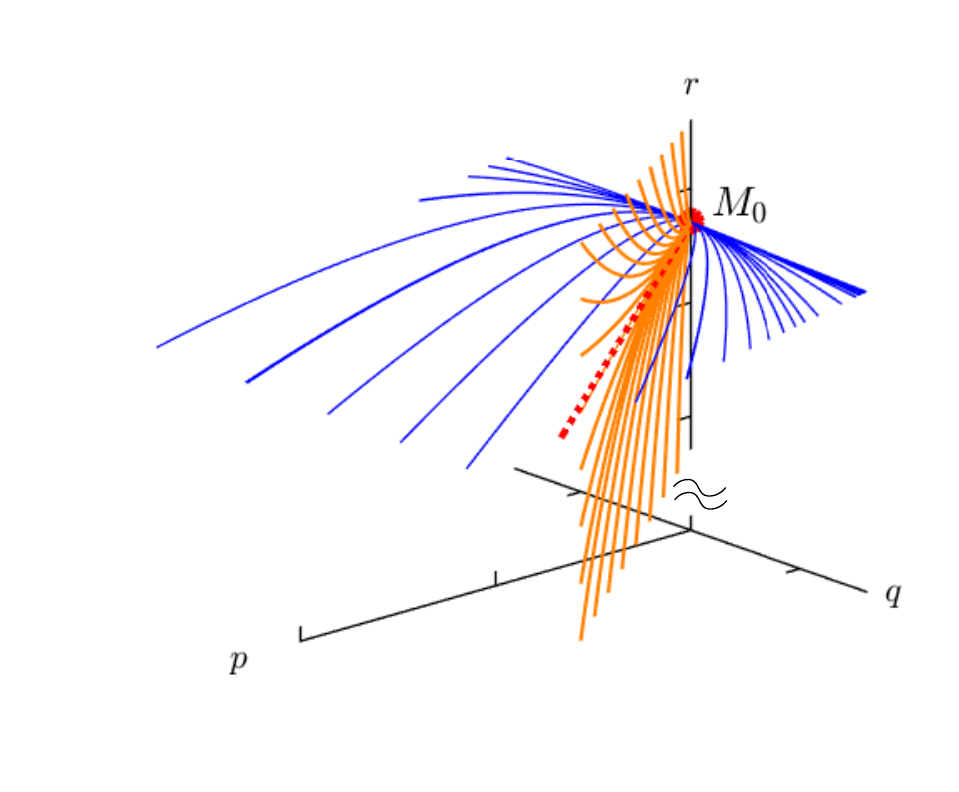} \label{fig:intersection}
  }
  \caption{ Numerical plot of the heteroclinic orbit (red dotted line).}
  \end{figure}

It is expected from the asymptotic behavior in  formula \eqref{eq:local} that the heteroclinics in this manifold will be 
 separated to those approaching $M_0$ along the eigenvector $\vec{X}_{01}$ (the blue curves) and one single orbit
approaching in the direction $\vec{X}_{02}$. To capture computationally the latter, we proceed as follows. Starting now
near $M_0$ we compute the solutions of \eqref{eq:p}-\eqref{eq:r} with initial data close to $M_0$ in the tangent space spanned by $\vec{X}_{02}$
and $\vec{X}_{03}$. We note that the unstable manifold ${W}^u(M_0)$ is the whole space and splits into the blue curves 
(forming the heteroclinic surface) and the orange curves in the transversal directions. This second computation captures the orange curves.
When the orbits reverse directionality the computation captures the (red) heteroclinic emanating in the direction $\vec{X}_{02}$;  see Figure \ref{fig:intersection}
for a zoom of this computation.

%

  \begin{figure}[ht]
  \subfigure[Detected orbit] {
  \hspace{-0.5cm}
  \includegraphics[scale=0.8]{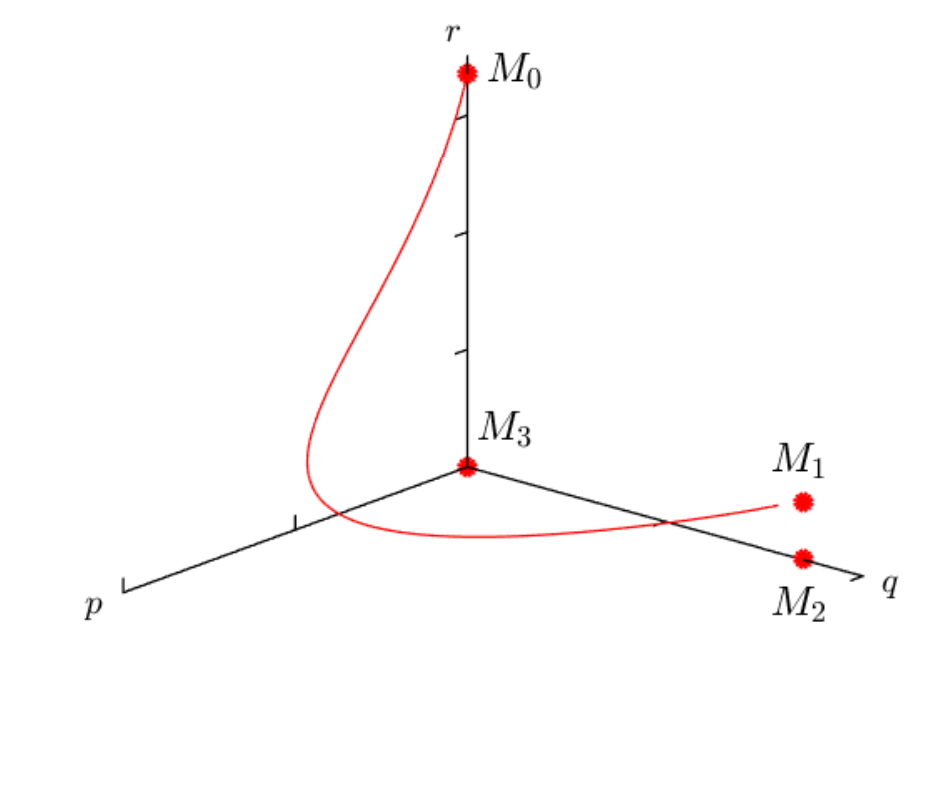} \label{fig:target_orbit_a}
  }\hspace{-0.75cm} \ \ \
  \subfigure[Zoom-in of orbit near $M_0$] {
  \includegraphics[scale=0.75]{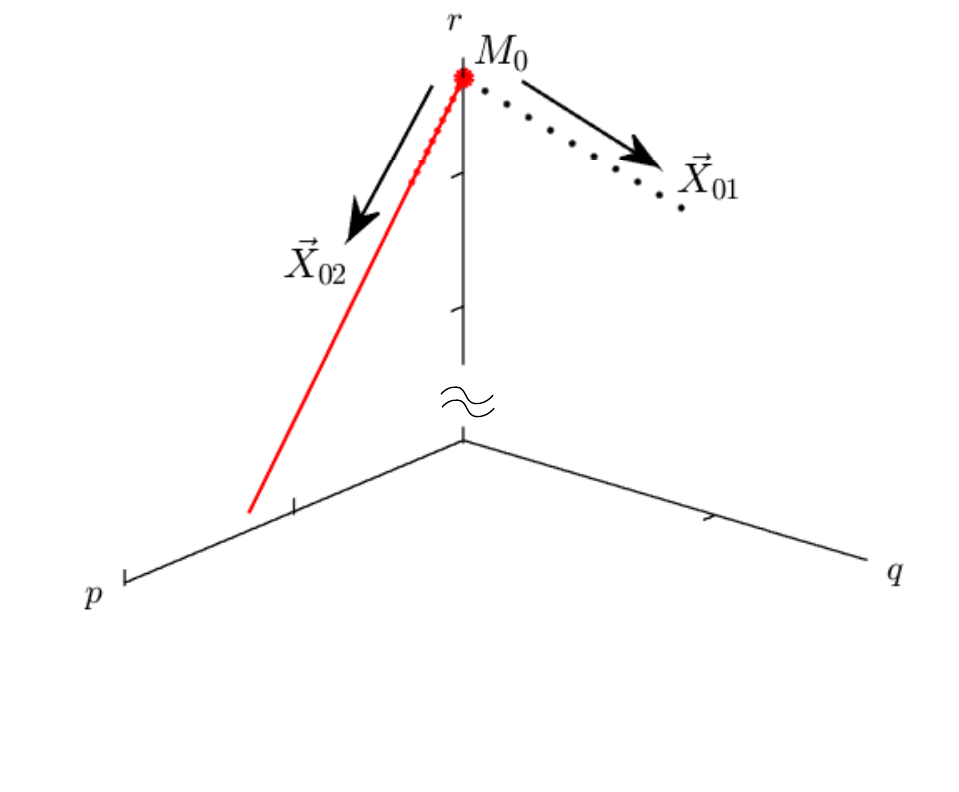} \label{fig:target_orbit_b}
  }
\caption{Simulation of the heteroclinic orbit ($n=0.3$ and $\lambda =2$).} 
\end{figure}

Having computed  the heteroclinic orbit  $(p, q, r)$ of the dynamical system \eqref{eq:p}-\eqref{eq:r}, our next task is to introduce the computed
solution to the transformation \eqref{eq:overall_transform} to produce the numerically computed form of the associated self-similar
solution. The computed solution  is presented  in Figures \ref{fig:gamma}-\ref{fig:sigma}. 
The graph is presented in terms of the original variables  $v(x,t)$, $\gamma(x,t)$, $u(x,t)$ and $\sigma(x,t)$ 
depicting the solution at various instances of time.
From Figures \ref{fig:gamma} - \ref{fig:sigma}, one sees that  initially all the variables are small perturbations of the corresponding uniform shear solutions  \eqref{eq:uss}. 
These nonuniformities then are amplified and eventually  develop a shear band. 
The strain and strain rate exhibit sharp peaks localized around the origin and the velocity field becomes a step like function, 
with the transition again occurring around the origin. The stress field initially assumes finite values, 
but as the shear band forms it collapses to computable zero at the center of the band.

\begin{figure}[ht]
  \centering
  \subfigure[strain ($\gamma$)]{
  \includegraphics[scale=0.3]{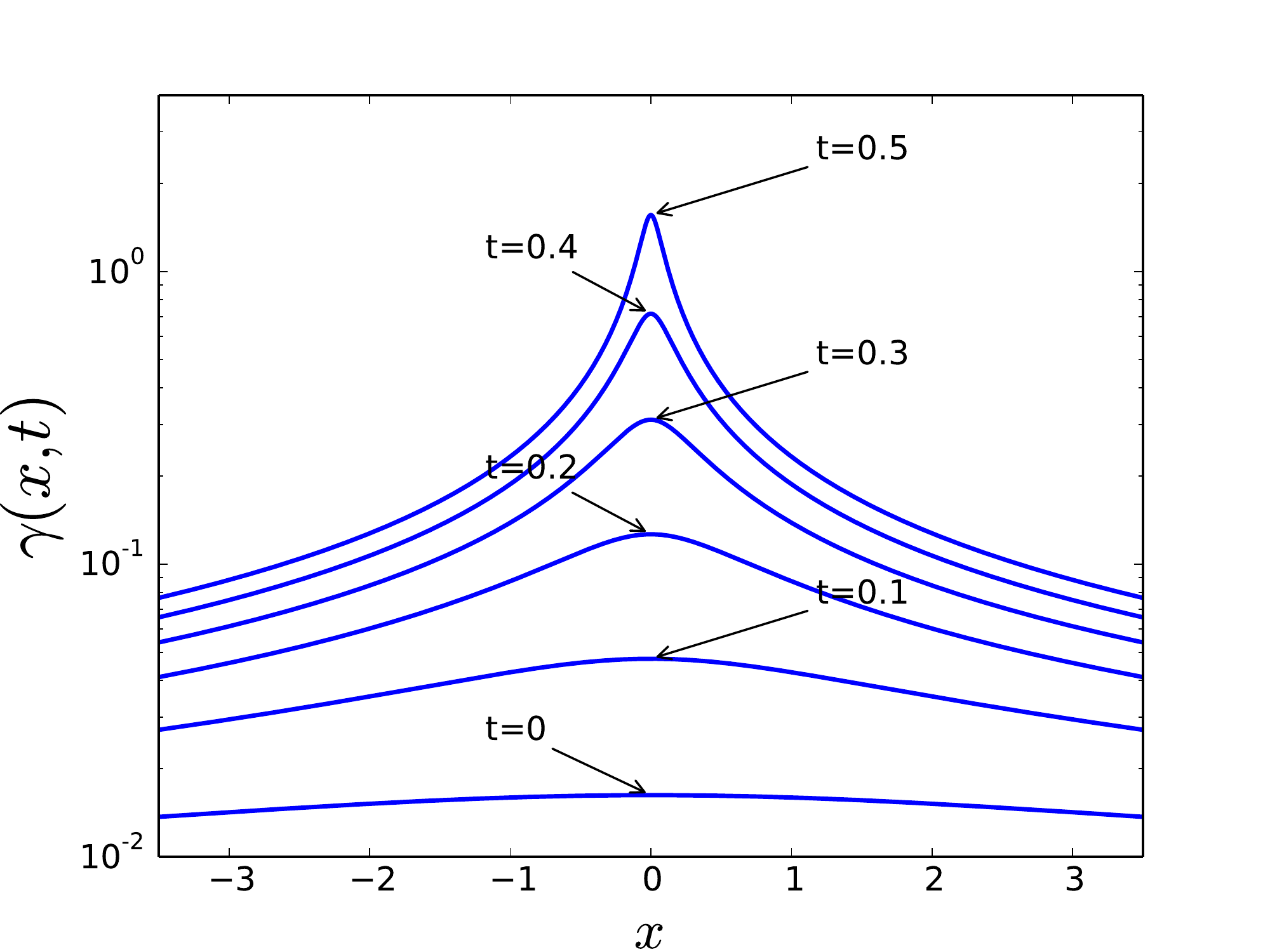} \label{fig:gamma}
  }
  \subfigure[strain rate ($u$)]{
  \includegraphics[scale=0.3]{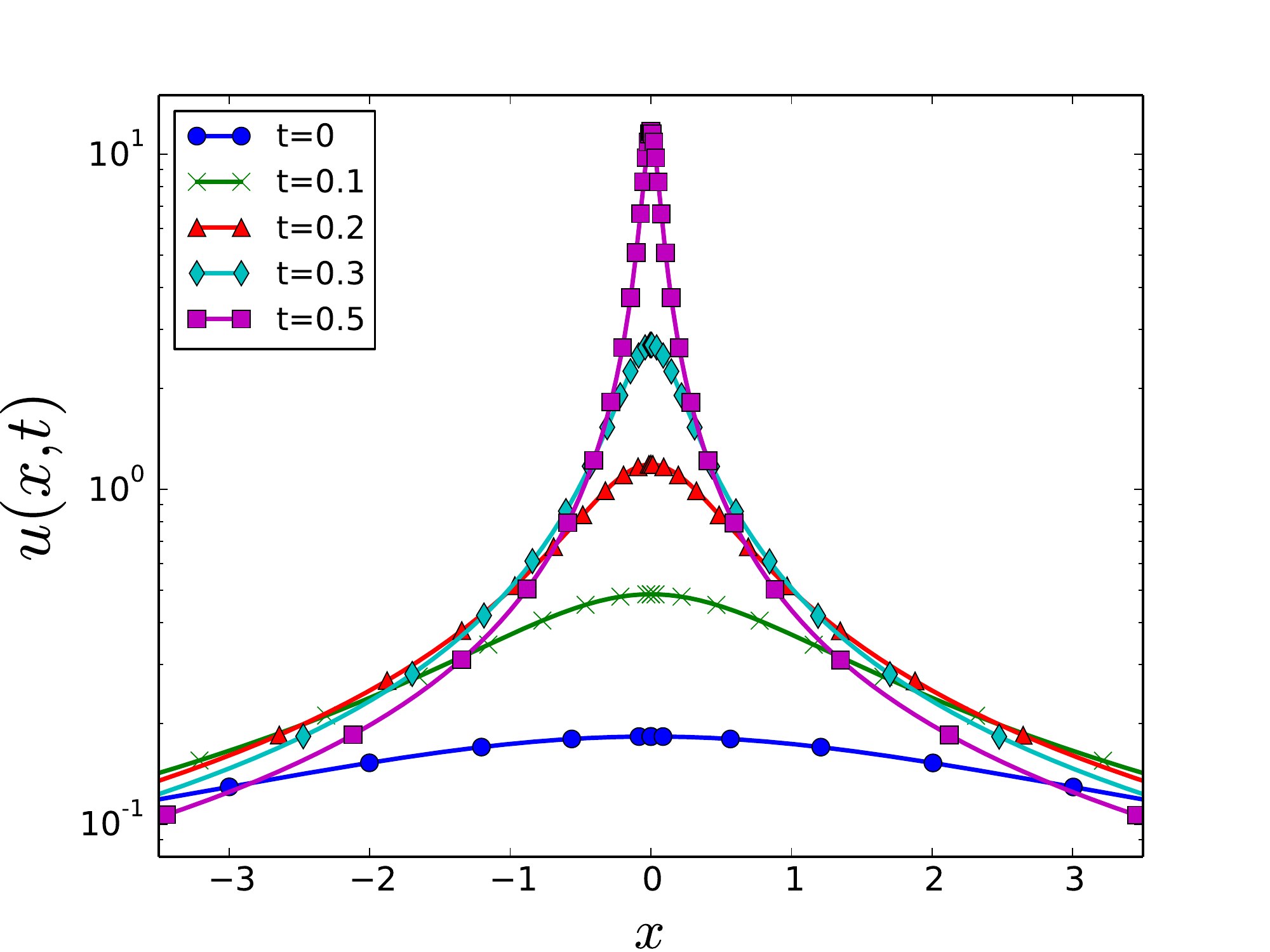} \label{fig:u}
  }
  \subfigure[velocity ($v$)] {
  \includegraphics[scale=0.3]{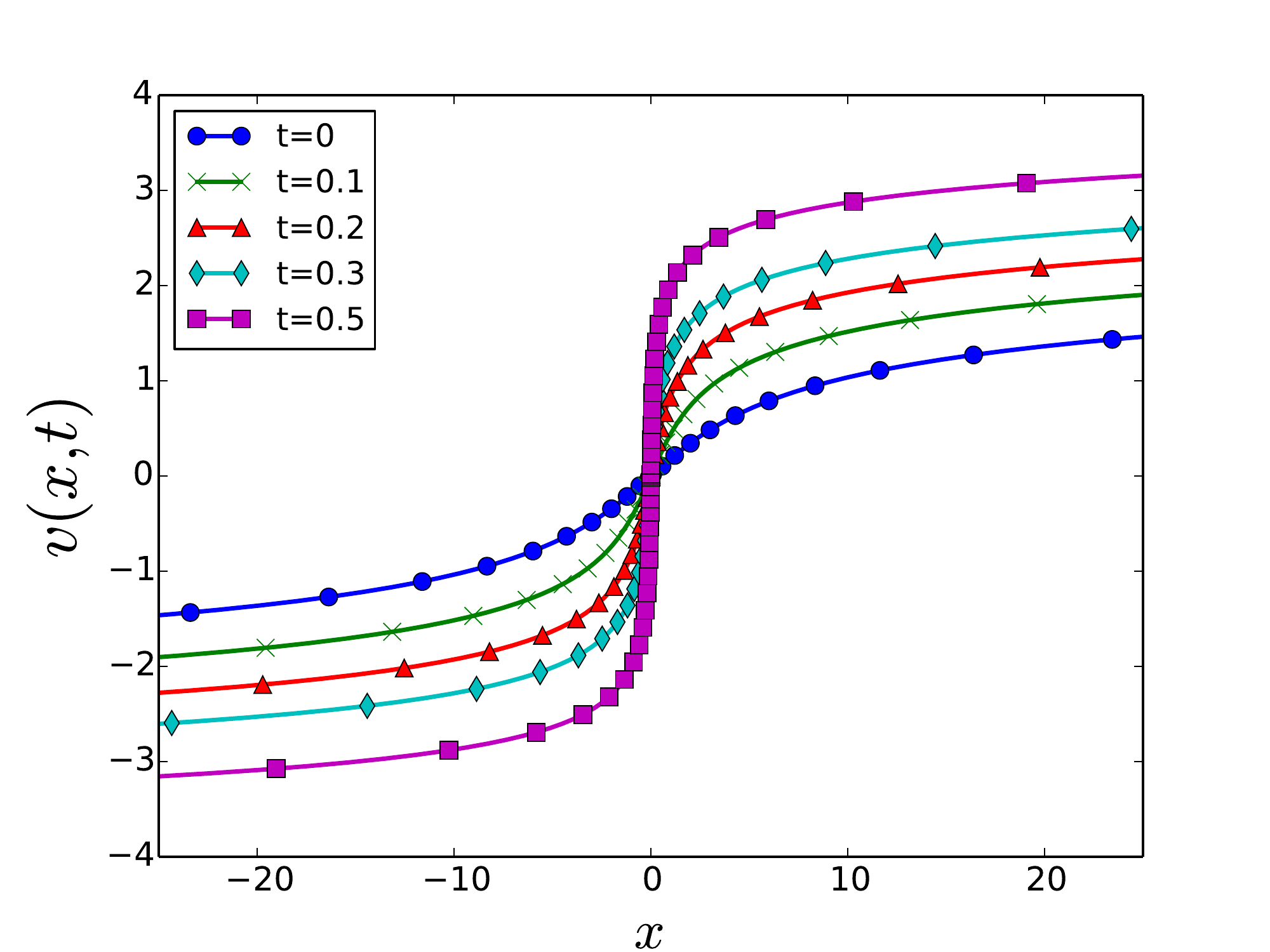} \label{fig:v}
  }
  \subfigure[stress ($\sigma$)] {
  \includegraphics[scale=0.3]{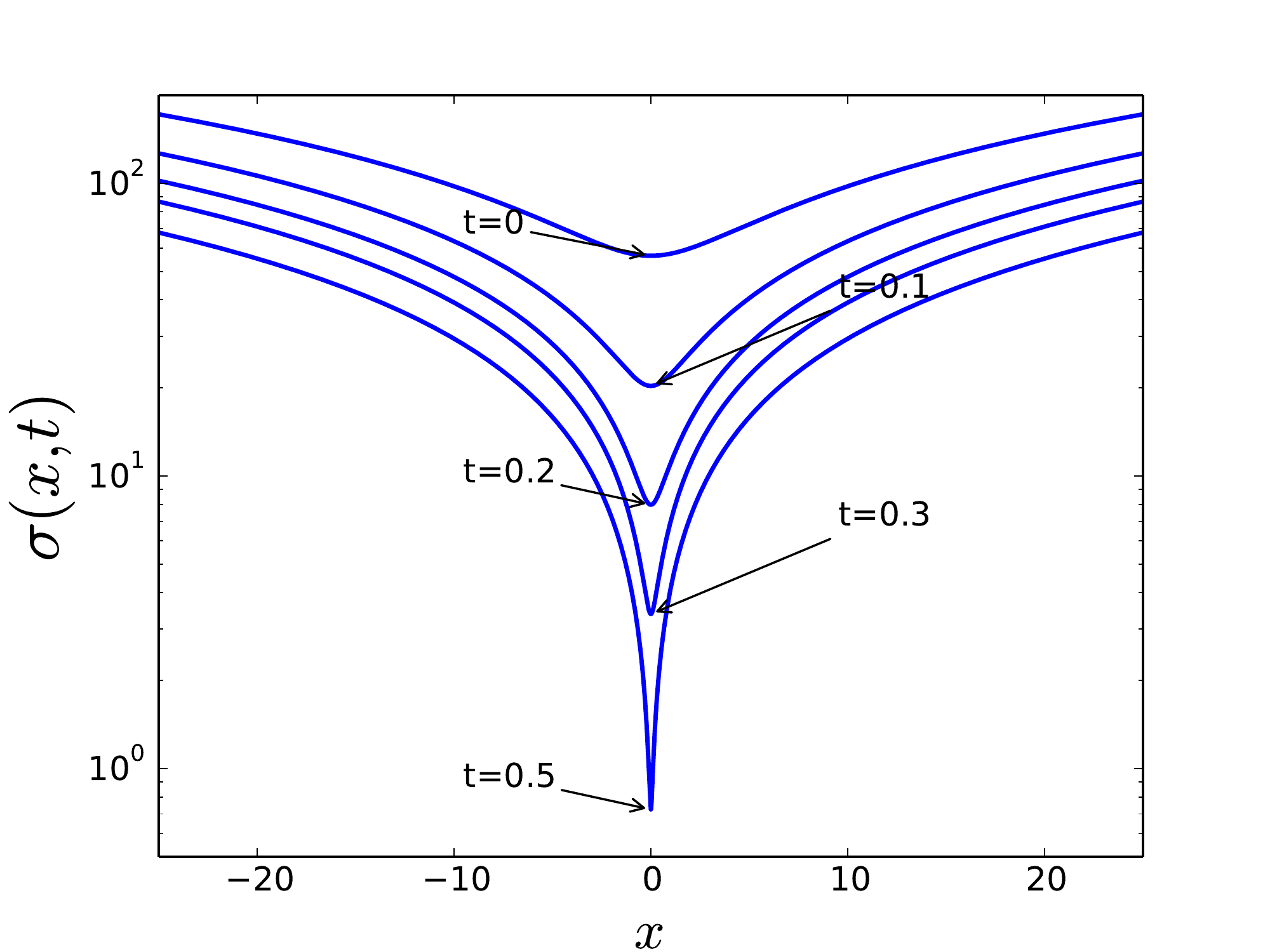} \label{fig:sigma}
  }
\caption{The localizing solution, for $n=0.05$ and $\lambda =10$, sketched in the original variables $v,\ \gamma, \ u, \ \sigma$.
 All graphs except $v$ are in logarithmic scale.} 
\end{figure}

\section{Discussions and Conclusions}

We have considered a model for shear deformation of a viscoplastic material, with a yield relation  depending the plastic strain $\gamma$ and the strain rate $\gamma_t$,
modeling a viscoplastic material exhibiting strain softening and strain rate sensitivity,  in the form of inverse and power law respectively. 
The effects of those two mechanisms were first assessed at the linearized level.  The uniform shearing solutions are stable for $n > 1$, see \cite{tzavaras_plastic_1986},
and we considered the range $0<n<1$. 
In this parameter range, strain rate sensitivity alters the nature of instability from {\it Hadamard instability} to {\it Turing instability}. 

Then we turned into explaining the emergence of a coherent structure in the nonlinear regime.
An effective equation is derived which captures the combined effect of the competition between strain-rate hardening and strain-softening.
We analyzed a class of focusing self-similar solutions exploiting the invariance properties of the underlying model.
This offers a quantitative way for describing the development of singularity beyond the linearized level. 
The associated localized solution arises as a heteroclinic orbit of an induced autonomous dynamical system.
For a certain range of the parameters, the existence of such a heteroclinic orbit was shown numerically. The associated profiles
exhibit coherent localizing structures that have the morphology of shear bands. The result hinges on a computational construction of 
the  heteroclinic orbit. 
The proof of existence of the heteroclinic orbit requires advanced results from dynamical systems, the geometric singular perturbation
theory, and is presented in \cite{LT16}.

\subsection*{Appendix : Proof of \eqref{eq:asymp_alpha}} \label{sec:appendix}

To establish the claim \eqref{eq:asymp_alpha}, the starting point is the  boundary conditions \eqref{eq:barcond2} and \eqref{eq:barcond1}. 
We compute a Taylor expansion of the variables $p(\log\xi)$, $q(\log\xi)$ and $r(\log\xi)$ near $\xi=0$. Initial conditions \eqref{eq:barcond1} imply an asymptotic behavior near $\xi=0$ of the form
\begin{align*}
 \bV(\xi) &= \bU(0)\xi + o(\xi^2),\\
 \bG(\xi) &= \bG(0) + \frac{\xi^2}{2} \bG_{\xi\xi}(0) + o(\xi^2), \\
 \bS(\xi) &= \bS(0) + \frac{\xi^2}{2} \bS_{\xi\xi}(0) + o(\xi^2), \\
 \bU(\xi) &= \bU(0) + \frac{\xi^2}{2} \bU_{\xi\xi}(0) + o(\xi^2).
\end{align*}
The values of the variables and their derivatives evaluated at $\xi=0$ can be inferred by differentiating system \eqref{eq:barsysv}-\eqref{eq:barsyss} repeatedly. 
After a straightforward calculation, we find
\begin{align*}
 \bar{V}(0) &=0, \quad
 \bar{V}_\xi(0) = \bar{U}(0)=\bG(0)\Big(1+ \frac{2 \lambda}{2-n}\Big), \quad
 \\
 \bar{ \Sigma }(0) &= \frac{ \bar{U}(0)^n }{ \bar{ \Gamma }(0) }, \quad
 \bar{\Gamma}_{\xi}(0) = \bar{\Sigma}_{\xi}(0) = \bar{U}_\xi(0) = 0, 
 \end{align*}
 and
 \begin{align*}
 \bar{\Gamma}_{\xi\xi}(0) &=-\big(\bar\Gamma(0)^{2}\bU(0)^{1-n}\big) \frac{ \lambda }{2-n} \frac{ \lambda }{1-n} \Bigg(\frac{1}{ \frac{1}{2} - \frac{n}{1-n} \frac{ \lambda }{r_0}}\Bigg) \frac{1}{ \lambda }, \\
 \bar{U}_{\xi\xi}(0) &=- \big(\bar\Gamma(0)^{2}\bU(0)^{1-n}\big)\frac{ \lambda }{2-n} \frac{ \lambda }{1-n} \Bigg(\frac{r_0 + {2 \lambda }}{ \frac{1}{2} - \frac{n}{1-n} \frac{ \lambda }{r_0}}\Bigg) \frac{1}{ \lambda }.
\end{align*}
Then, the Taylor expansion of $p(\log\xi)$ near $\xi=0$ gives 
\begin{align*}
 {p}(\log\xi) &= \frac{ \tilde \gamma}{ \tilde \sigma} = \frac{ \xi^{ \frac{2}{2-n} } \, \bar\Gamma }{ \xi^{ \frac{2}{2-n} } \, \bar\Sigma } = \xi^2 \frac{ \bar\Gamma }{ \bar\Sigma }
 = \xi^2 \frac{ \bar\Gamma(0) }{ \bar\Sigma(0) } + o(\xi^2) = \bar\Gamma(0)^{2}\bU(0)^{-n}\xi^2 + o(\xi^2) \, ,
\end{align*}
{and the Taylor expansion of $q(\log\xi)$  is }
\begin{align*}
 {q}(\log\xi) &= n\frac{ \tilde v}{ \tilde \sigma} = n\frac{ \xi^{ \frac{n}{2-n} } \, \bar V }{ \xi^{ -1+\frac{n}{2-n} } \, \bar\Sigma } = n\xi \frac{ \bar V }{ \bar\Sigma }= n\bigg[\xi \frac{ \bar V(0) }{ \bar\Sigma(0) } + \xi^2 \frac{\bar{ V }(0)}{\bar{ \Sigma }(0)} \bigg(\frac{\bar{ V }_\xi(0)}{\bar{ V }(0)} - \frac{\bar{ \Sigma }_\xi(0)}{\bar{ \Sigma }(0)}\bigg) + o(\xi^2)\bigg]\\
 &=n\bar\Gamma(0)\bU(0)^{1-n}\xi^2 + o(\xi^2). 
\end{align*}
{Finally, notice first that
\begin{align}
 {r}(\log\xi)&= \frac{ \tilde u}{ \tilde \gamma} = \frac{ \xi^{ \frac{2}{2-n} } \, \bar U }{ \xi^{ \frac{2}{2-n} } \, \bar\Gamma }  = \frac{ \bar U }{ \bar\Gamma } \quad \text{and thus} \quad
 \lim_{\xi \rightarrow0} r(\log\xi) = \frac{\bU(0)}{\bG(0)} = 1 + \frac{2 \lambda}{2-n} = r_0. \label{eq:lambda}
\end{align}
The Taylor expansion of $r(\log\xi)-r_0$ near $\xi=0$ yields }
\begin{align*}
 {r}(\log\xi) -r_0 &= \frac{ \bar U }{ \bar\Gamma } - \frac{ \bar{U}(0)}{ \bar{ \Gamma }(0)} 
 = \xi \frac{\bar{ U }(0)}{\bar{ \Gamma }(0)} \bigg(\frac{\bar{ U }_\xi(0)}{\bar{ U }(0)} - \frac{\bar{ \Gamma }_\xi(0)}{\bar{ \Gamma }(0)}\bigg) \\
 &+ \frac{1}{2}\xi^2\bigg[ \frac{\bar{U}_{\xi\xi}(0)}{ \bar{ \Gamma }(0)} - 2 \frac{ \bar{U}_\xi(0)\bar \Gamma_\xi(0)}{\bar \Gamma^2(0)} + \bar{U}(0) \bigg(- \frac{ \bar\Gamma_{\xi\xi}(0) }{\bar \Gamma^2(0)} + 2 \frac{\big(\bar \Gamma_\xi(0)\big)^2}{ \bar \Gamma^3(0) }\bigg) \bigg] + o(\xi^2)\\
 &=-\big(\bar\Gamma(0)\bU(0)^{1-n}\big)\frac{ \lambda }{2-n} \frac{ \lambda }{1-n} \Bigg(\frac{1}{ \frac{1}{2} - \frac{n}{1-n} \frac{ \lambda }{r_0}}\Bigg)\xi^2 + o(\xi^2) \, .
\end{align*}
From $\log\xi = \eta$, we conclude that
\begin{align*}
   &e^{-2\eta}\left[\begin{pmatrix}
   p(\eta) \\ q(\eta) \\ r(\eta)
  \end{pmatrix}
  - M_0\right]
  \rightarrow {\bar\Gamma(0)\bU(0)^{1-n}}\vec{X}_{02}, \quad \text{as $\eta \rightarrow -\infty$. }
\end{align*}
Now \eqref{eq:barcond2} specifies $\bG(0)$ and in view of \eqref{eq:lambda}, 
\begin{equation}
\kappa = \bar\Gamma_0\bU_0^{1-n}, \quad \text{where $\bU_0 = \bG_0 \Big(1+ \frac{2 \lambda}{2-n}\Big)$}. \label{eq:kappa}
\end{equation}
Notice that $\bG_0$ has the role of determining a translation factor of the curve. 

\begin{remark}
 Let $\big( \bar{ \Gamma }, \bar{ V }, \bar{ \Sigma} \big)$ be a solution of \eqref{eq:barsysv}-\eqref{eq:barsyss} with \eqref{eq:barcond2},\eqref{eq:barcond1} and  $\varphi^\star(\eta)$ be the corresponding heteroclinic orbit of \eqref{eq:p}-\eqref{eq:r}. Further, let $\big( \bar{ \Gamma }_A, \bar{ V }_A, \bar{ \Sigma}_A \big)$ be the rescaled solution as in \eqref{eq:inv_bar} and $\varphi^\star_A(\eta)$ be the corresponding heteroclinic orbit. Then $\varphi^\star_A(\eta) = \varphi^\star(\eta+\eta_0)$ with $\eta_0 = \log A$.

  Let $\eta_0 = \log A$. Then
%
%
\begin{align*}
  \tilde{v}_A\big(\log\xi\big) &:=~~~~\xi^{ \frac{n}{2-n}} \,\bar{V}_A(\xi)=~~~~(A\xi)^{ \frac{n}{2-n}} \,\bar{V}(A\xi)=\tilde{v}\big(\eta+\eta_0\big), \\  
  \tilde{ \gamma}_A\big(\log\xi\big) &:=~~~~\xi^{ \frac{2}{2-n}} \,\bar{ \Gamma}_A(\xi)=~~~~(A\xi)^{ \frac{2}{2-n}} \,\bar{ \Gamma}(A\xi)=\tilde{ \gamma}\big(\eta+\eta_0\big), \\
  \tilde{ \sigma}_A\big(\log\xi\big) &:=\xi^{ -1+\frac{n}{2-n}} \bar{ \Sigma}_A(\xi)=(A\xi)^{ -1+\frac{n}{2-n}} \bar{ \Sigma}(A\xi)=\tilde{ \sigma}\big(\eta+\eta_0\big)
\end{align*}
and hence $\varphi^\star_A(\eta) = \varphi^\star(\eta+\eta_0)$. 
\end{remark}

\subsection*{Acknowledgements}
Research supported by King Abdullah University of Science and Technology (KAUST).

\bibliographystyle{siam}

\begin{thebibliography}{10}

\bibitem{AKS87} 
{\sc L.~Anand, K.H.~Kim and T.G.~Shawki}, 
Onset of shear localization in viscoplastic solids, 
{\it J. Mech. Phys. Solids}
{\bf 35} (1987), 407--429.

\bibitem{BC04}
{\sc Th.~Baxevanis and N.~Charalambakis},
The role of material non-homogeneities on the formation and evolution of strain non-uniformities in thermoviscoplastic shearing, 
{\em Quart. Appl. Math.} {\bf 62} (2004),  97--116.

\bibitem{baxevanis_adaptive_2010}
{\sc Th.~Baxevanis, Th~Katsaounis, and A.~E. Tzavaras},  
Adaptive finite element computations of shear band formation, 
  {\em Math. Models  Methods Appl. Sci.} {\bf 20}  (2010),  423--448.

\bibitem{bertsch_effect_1991}
{\sc M.~Bertsch, L.~Peletier, and S.~Verduyn~Lunel}, 
The effect of temperature dependent viscosity on shear flow of  incompressible fluids,
{\em SIAM J. Math. Anal.} {\bf 22 } (1991), 328--343.

\bibitem{BD02} 
{\sc T.J.~Burns and M.A.~Davies},
On repeated adiabatic shear band formation during high speed machining,
{\em International Journal of Plasticity} {\bf 18 } (2002),  507--530.

\bibitem{CB99} 
{\sc L.~ Chen and R.C.~Batra },
The asymptotic structure of a shear band in mode-II deformations.
{\em International Journal of Engineering Science} {\bf 37} (1999),  895--919.

\bibitem{clifton_rev_1990}
{\sc R.J. Clifton},  High strain rate behaviour of metals, {\it Applied
  Mechanics Review} {\bf 43} (1990), S9--S22.

\bibitem{clifton_critical_1984}
{\sc R.~J. Clifton, J.~Duffy, K.~A. Hartley, and T.~G. Shawki}, 
On critical conditions for shear band formation at high strain rates.
{\em Scripta Metallurgica} {\bf 18} (1984), 443--448.

\bibitem{dafermos_adiabatic_1983}
{\sc C.~M. Dafermos and L.~Hsiao}, 
Adiabatic shearing of incompressible fluids with temperature-dependent viscosity.
{\em Quart.  Applied Math.} {\bf 41} (1983), 45--58.

\bibitem{estep_2001}
{\sc Donald~J Estep, Sjoerd M~Verduyn Lunel, and Roy~D Williams}, 
{Analysis of Shear Layers in a Fluid with Temperature-Dependent Viscosity},
 {\em  J. Comp. Physics}  {\bf 173} (2001), 17--60.

\bibitem{FM87}
{\sc C.~Fressengeas and A.~Molinari}, 
Instability and localization of
  plastic flow in shear at high strain rates, 
  {\em J.  Mech. Physics of Solids} {\bf 35} (1987), 185--211.
  
\bibitem{HN77}
{\sc J.W.~Hutchinson and K.W.~Neale},
Influence of strain-rate sensitivity on necking under uniaxial tension,
{\em  Acta Metallurgica} {\bf 25} (1977), 839--846.

\bibitem{KT09}
{\sc Th.~Katsaounis and A.E.~Tzavaras}, 
 Effective equations for localization and shear band formation, 
 {\em SIAM J. Appl. Math.}  {\bf 69} (2009), 1618--1643.


\bibitem{KOT14}
{\sc Th.~Katsaounis, J.~Olivier, and A.E.~Tzavaras}, 
Emergence of coherent localized structures in shear deformations of
  temperature dependent fluids, arXiv preprint arXiv:1411.6131,  (2014).


  

\bibitem{LT16}
{\sc M.-G.~Lee and A.E.~Tzavaras},
Existence of localizing solutions in plasticity via the geometric singular perturbation theory.
(preprint) 2016.


\bibitem{MC87}
{\sc A.~Molinari and R.J.~Clifton}, 
Analytical characterization of shear
  localization in thermoviscoplastic materials, 
{\em Journal of Applied Mechanics}  {\bf 54} (1987), 806--812.



\bibitem{SC89}
{\sc T.~G. Shawki and R.~J. Clifton}, 
Shear band formation in thermal viscoplastic materials, 
{\em Mechanics of Materials} {\bf 8 } (1989), 13--43.

\bibitem{Turing52}
{\sc A.M.~Turing},  
The chemical basis of morphogenesis.
{\em Phil. Trans. R. Soc. London B},  {\bf 237} (1952), 37--72.


\bibitem{tzavaras_plastic_1986}
{\sc A.E. Tzavaras}, 
Plastic shearing of materials exhibiting
  strain hardening or strain softening, 
  {\em Archive for Rational Mechanics and  Analysis} {\bf 94} (1986), 39--58.
  
 \bibitem{tzavaras87}
{\sc A.E. Tzavaras}, 
Effect of thermal softening in shearing of strain-rate dependent materials.
{\em Archive for Rational Mechanics and Analysis}, {\bf 99} (1987), 349--374.


\bibitem{tzavaras_strain_1990}
{\sc A.E. Tzavaras}, 
Strain softening in viscoelasticity of the rate type. 
{\em J. Integral Equations Appl.} {\bf  3}  (1991), 195--238.
  
\bibitem{tzavaras_nonlinear_1992}
{\sc A.E. Tzavaras}, 
Nonlinear analysis techniques for shear band formation at high strain-rates, 
{\em Applied Mechanics Reviews} {\bf  45} (1992), S82--S94.

\bibitem{walter_1992}
{\sc J.W.~Walter}, 
Numerical experiments on adiabatic shear band formation
  in one dimension.
  {\em International Journal of Plasticity} {\bf  8} (1992), 657--693.

\bibitem{wright_survey_2002}
{\sc T.W.~Wright}, {\em The Physics and Mathematics of Shear Bands.} Cambridge
  Univ. Press, 2002.

\bibitem{WW87}
{\sc T.W.~Wright and J.W.~Walter}, 
On stress collapse in adiabatic shear bands, 
{\it J. Mech. Phys. of Solids} {\bf 35} (1987),
 701--720.
  
\bibitem{WF84}
{\sc F.H.~Wu and L.B.~Freund},
Deformation trapping due to thermoplastic instability in one-dimensional wave propagation,
{\em J. Mech. Phys. of Solids} {\bf  32} (1984), 119--132.


\bibitem{zener_effect_1944}
{\sc C.~Zener and J.~H. Hollomon}, 
Effect of strain rate upon plastic flow of steel,
{\em  Journal of Applied Physics} {\bf 15} (1944), 22--32.

\end{thebibliography}

\end{document}